\theoremstyle{plain}
\newtheorem{theorem}{Theorem}[section]
\newtheorem{lemma}[theorem]{Lemma}
\newtheorem{proposition}[theorem]{Proposition}
\newtheorem{corollary}[theorem]{Corollary}
\newtheorem{condition}[theorem]{Condition}
\newtheorem*{thm*}{Theorem}
\newtheorem*{lemma*}{Lemma}
\newtheorem*{prop*}{Proposition}
\newtheorem*{cor*}{Corollary}
\newtheorem*{conj*}{Conjecture}
\theoremstyle{definition}
\newtheorem{definition}[theorem]{Definition}
\newtheorem{example}[theorem]{Example}
\theoremstyle{remark}
\newtheorem*{rmk}{Remark}
\newcommand{\comment}[1]{}
\renewcommand{\dim}[1]{\textsf{dim}\,#1}
\renewcommand{\ker}[1]{\textsf{Ker}\,#1}
\newcommand{\supp}[1]{\textsf{supp}\,#1}
\newcommand{\spn}[1]{\textsf{span}\,#1}
\newcommand{\rank}[1]{\textsf{rank}\,#1}
\newcommand{\diag}[1]{\textsf{diag}\,#1}
\newcommand{\im}[1]{\textsf{Im}\,#1}
\newcommand{\1}[1]{\mathbbm{1}\,#1}
\newcommand{\Acal}{\mathcal{A}}
\newcommand{\Newt}{\textsf{Newt}}
\renewcommand{\int}{\textsf{int}}
\newcommand{\relint}{\textsf{relint}}
\renewcommand{\supp}{\textsf{supp}}
\newcommand{\conv}{\textsf{conv}}
\newcommand{\mvol}{\textsf{MVol}}
\newcommand{\vol}{\textsf{Vol}}
\newcommand{\bfa}{\mathbf{a}}
\newcommand{\bfb}{\mathbf{b}}
\newcommand{\bfc}{\mathbf{c}}
\newcommand{\bfd}{\mathbf{d}}
\newcommand{\bfe}{\mathbf{e}}
\newcommand{\bfp}{\mathbf{p}}
\newcommand{\bfq}{\mathbf{q}}
\newcommand{\bfr}{\mathbf{r}}
\newcommand{\bfv}{\mathbf{v}}
\newcommand{\bfw}{\mathbf{w}}
\newcommand{\bfx}{\mathbf{x}}
\newcommand{\bfy}{\mathbf{y}}
\newcommand{\bfz}{\mathbf{z}}
\newcommand{\R}{\mathbb{R}}
\newcommand{\C}{\mathbb{C}}
\newcommand{\Z}{\mathbb{Z}}
\title{Mixed volumes of networks with binomial steady-states}
\author{Jane Ivy Coons\footnote{St John's College, University of Oxford and Mathematical Institute, University of Oxford, jane.coons@maths.ox.ac.uk}, Mark Curiel\footnote{Department of Mathematics, University of Hawai`i at M\={a}noa, curielm@hawaii.edu},  Elizabeth Gross\footnote{Department of Mathematics, University of Hawai`i at M\={a}noa, egross@hawaii.edu} }
\date{}
\begin{document}

\maketitle

\abstract{The steady-state degree of a chemical reaction network is the number of complex steady-states for generic rate constants and initial conditions.  One way to bound the steady-state degree is through the mixed volume of the steady-state system or an equivalent system. In this work, we show that for partionable binomial networks, whose resulting steady-state systems are given by a set of binomials and a set of linear (not necessarily binomial) conservation equations,  computing the mixed volume is equivalent to finding the volume of a single mixed cell that is the translate of a parallelotope.  We then turn our attention to identifying networks with binomial steady-state ideals. To this end, we give a coloring condition on directed cycles that guarantees the network has a binomial steady-state ideal, and consequently, toric steady-states. We highlight both of these theorems using a class of networks referred to as species-overlapping networks and give a formula for the mixed volume of these networks.  }

\section{Introduction}

Chemical reaction networks (CRNs) are graphs on a set of complexes (e.g. molecules) that visually summarize the interactions present in a chemical system. They are useful for modeling cellular biological processes such as signal transduction, and under a more general setting, are found in epidemiology and ecology.   Under the assumption of mass-action
kinetics, chemical reaction networks encode a system of \emph{polynomial} ordinary differential equations.  Understanding the number of possible (real, positive) stable steady-states of such a system is key to determining whether a given reaction network is an appropriate model for a given biological process.  Indeed, one highly active area of research in regards to chemical reaction networks is to develop criteria that guarantee or preclude  \emph{multistationarity}, the capacity for multiple real, positive  stable steady-states (see, e.g. \cite{JS2015}).

While multistationarity is determined by the number of possible positive stable steady-states over the reals, we can relax the definition of steady-state to include any complex solution to the steady-state equations obtained by setting each ODE equal to zero.  The number of complex steady-states for generic rate constants and
initial conditions is called the \emph{steady-state degree} of a chemical reaction network \cite{grosshill2021steady}. The steady-state degree is a bound on the number of real, positive steady-states.

In general, the steady-state degree can be challenging to determine, and so far, most results focus on a particular model or families of models.  For example, in \cite{GHRS16}, the authors show that the steady-state degree of the Wnt shuttle model is 9. One way to bound the steady-state degree of a chemical reaction network is through the mixed volume of their corresponding steady-state system.  For example, the mixed volume was used to bound the steady-state degree of a model of ERK regulation in \cite{OSTT2019} as well as for three family of networks in \cite{grosshill2021steady}, including multisite distributive phosphorylation networks.  

In this work, we focus on networks whose steady-state ideals are binomial, that is, generated by polynomials with at most two terms.  In the literature, networks with binomial steady-state ideals that admit a real solution are referred to networks with \emph{toric steady-states} \cite{PDSC2012}.   This work has two key theorems (Theorem \ref{thm:main} and Theorem \ref{thrm:toriccycles}). In  our first key theorem (Theorem \ref{thm:main}), we give a formula for the mixed volume of  \emph{partionable} binomial networks after showing that computing the mixed volume of these networks amounts to finding the volume of a single mixed cell that is the translate of a parallelotope (Theorem \ref{thm:MixedCellTheorem}). The subtlety here is that while the steady-state ideal is binomial for these networks, adding the conservation equations, which are linear equations in the species concentrations, can result in a steady-state system that is not binomial.  Our second key theorem (Theorem \ref{thrm:toriccycles}) concerns  identifying binomial networks, with a particular focus on cycle networks. In \cite{PDSC2012}, Perez Millan, Dickenstein, Shiu, and Conradi, give a sufficient condition on the complex-to-species rate matrix that guarantees a binomial steady-state ideal. In Theorem \ref{thrm:toriccycles}, we show that, for directed cycles, their condition is equivalent to a coloring condition on the underlying directed graph.  We showcase both theorems through the example of \emph{species-overlapping cycles}, giving a formula for the mixed volume of species overlapping cycles in Theorem \ref{thm:overlappingspecies}.  

This paper is organized as follows: in Section 2, we review chemical reaction networks, including networks with binomial steady-states, and mixed volumes of polynomial systems, including fine mixed subdivisions.  In Section 2, we also define PDSC networks, networks that satisfy a condition for binomiality of Perez Millan, Dickenstein, Shiu, and Conradi \cite{PDSC2012}. We call such a network a PDSC network and we show that in this case, it is straighforward to check if the steady-state system is equivalent to a square system. In Section 3, we give a formula for the mixed volume for partionable network binomial networks. In Section 4, we study cycles with binomial steady-states, giving a condition on directed cycles that guarantees toric steady-states.  We end Section 4 with an investigation of species-overlapping cycles and give a fomula for the mixed volume of these networks. We conclude the manuscript with a discussion in Section 5.   
\section{Background}

Here we review chemical reaction network theory with a particular focus on the existing literature regarding networks with binomial steady-state ideals. We also define the mixed volume of a polynomial system and review the method of computing mixed volumes via fine mixed subdivisions.

\subsection{Chemical reaction networks.}

To motivate the definition of a chemical reaction network, it is beneficial to follow an example of a chemical reaction network and the polynomial ODEs that arise from it. Consider a closed system consisting of three species $A$,$B$, and $C$ whose interactions are represented pictorially as in Figure \ref{fig:crn_example}.
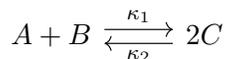
\begin{figure}[h]
    \centering
    \begin{tikzcd}
        A + B \ar[r,"\kappa_1",shift left] & 2C \ar[l,"\kappa_2",shift left]
    \end{tikzcd}
    \caption{A chemical reaction network involving two reactions and three species.}
    \label{fig:crn_example}
\end{figure}

The \textit{complexes} of the above network are $A+B$ and $2C$. \textit{Reactions} are denoted by labeled arrows between complexes, where the labels are positive real numbers called \textit{reaction rate constants} and can be thought to govern the rate of the reaction.
Interpret the reaction $$\begin{tikzcd}[column sep = small]
    A+B \ar[r,"\kappa_1"] & 2C
\end{tikzcd}$$ to mean $A$ and $B$ react to create two copies of $C$, hence the complex label $2C$. With the interest of tracking the amount, or \emph{concentration}, of $A$, $B$, and $C$ in the system, the net production of $A$, for instance, in the reaction $\begin{tikzcd}[column sep = small]
    A+B \ar[r,"\kappa_1"] & 2C
\end{tikzcd}$ is $-1$ since there is a loss of one copy of $A$ if this reaction fires once. On the other hand, the net production of $C$ in this same reaction is $2$ since two copies of $C$ are gained. These integer values are called \textit{stoichiometric coefficients} and they are determined by the structure of the complexes in each reaction.

Under the assumption of mass-action kinetics, a reaction outputs its products proportionally to the product of the concentrations of reacting species. With this assumption, the change in concentration is a function of the species concentrations where each reaction contributes a term to this function. If $x_A(t)$, $x_B(t)$, and $x_C(t)$ denote the concentrations of $A$, $B$, and $C$ respectively at a given time $t$, then for example the reaction $\begin{tikzcd}[column sep = small]A+B \ar[r,"\kappa_1"] & 2C
\end{tikzcd}$ contributes the term $-\kappa_1 x_Ax_B$ to the change in concentration of $A$ and $B$ and the term $2\kappa_1 x_Ax_B$ to the change in concentration of $C$. The associated differential equations are therefore polynomial in the species concentrations where each reaction contributes a monomial term whose coefficient is a product of a rate constant and a stoichiometric coefficient. Thus, the assumption of mass-action kinetics gives rise to the following polynomial ordinary differential equations:
\begin{align*}
    \frac{dx_A}{dt} &= -\kappa_1 x_Ax_B + \kappa_2 x_C^2\\
    \frac{dx_B}{dt} &= -\kappa_1 x_Ax_B + \kappa_2 x_C^2\\
    \frac{dx_C}{dt} &= 2\kappa_1 x_Ax_B - 2\kappa_2 x_C^2.
\end{align*}

In general, a chemical reaction network is a directed graph $G$ without loops whose vertices are the complexes of the network and whose edges are labeled by the reaction rate constants. Following the notation in \cite{PDSC2012}, let $s$ and $m$ denote the number of species and number of complexes, respectively. Each of the complexes $\bfy_1,\ldots,\bfy_m$ are formal linear sums of the species of the network, where the coefficients are nonnegative integers. We collect the coefficients into a matrix $Y = \begin{pmatrix}
y_{ij}
\end{pmatrix} \in \mathbb{Z}^{m \times s}$ and identify the $i$th row of this matrix with the complex $\bfy_i$.

Assuming mass action kinetics, every pair of species interact with equal probability and is independent of location. The rate of production for the reaction $\bfy_i \to \bfy_j$ is therefore proportional to a monomial in the concentrations of reacting species and this monomial is denoted by $\bfx^{\bfy_i} = \prod_{k=1}^s x_k^{y_{ik}}$ where $x_{k}$ denotes the concentration of the $k$th species. After choosing the proportional constants for every reaction, label the edge $\bfy_i \to \bfy_j$ with the parameter $\kappa_{ij} \in \mathbb{R}_{>0}$. Let $A_{{\boldsymbol\kappa}}$ denote the negative Laplacian of the network, namely, the $m \times m$ matrix whose row sums are zero and its $(i,j)$-entry is $\kappa_{ij}$ if $\bfy_i \to \bfy_j$ is an edge of the graph $G$. A main object of study is the matrix product $\Sigma = Y^t \cdot A_{\boldsymbol\kappa}^t$, called the \textit{complex-to-species rate matrix}. Every network defines the polynomial ODE system
\begin{equation}\label{eq:massactionsystem}
    f(\bfx) = \frac{d\bfx}{dt} = \Sigma\, \bfx^{Y^t},
\end{equation}
where the notation $\bfx^B$ for a matrix $B$ with $m$ columns $\bfb_i$ denotes the vector of monomials $(\bfx^{\bfb_1},\ldots,\bfx^{\bfb_m})^t$. 
The ideal generated by the polynomials in the above system is called the \textit{steady-state ideal} and is denoted $I_G$. A \textit{chemical reaction system} refers to the system of differential equations (\ref{eq:massactionsystem}) associated to a network after making a choice of rate constants ${\boldsymbol\kappa} = (\kappa_{ij})$ where $\kappa_{ij} \in \R_{>0}$ for each reaction $\bfy_i \to \bfy_j$. For instance, the matrices associated to the motivational example are 
$$\begin{array}{cc}
    Y^t = \begin{pmatrix}
        1 & 0\\
        1 & 0\\
        0 & 2
    \end{pmatrix}, & A_{\boldsymbol\kappa}^t = \begin{pmatrix}
        -\kappa_1 & \kappa_2\\
        \kappa_1 & -\kappa_2
    \end{pmatrix}, \\\\
    \Sigma = \begin{pmatrix}
        -\kappa_1 & \kappa_2\\
        -\kappa_1 & \kappa_2\\
        2\kappa_1 & -2\kappa_2
    \end{pmatrix},\text{ and} & \begin{pmatrix}
        \bfx^{\bfy_1}\\
        \bfx^{\bfy_2}
    \end{pmatrix} = \begin{pmatrix}
        x_Ax_B\\
        x_C
    \end{pmatrix}.
\end{array}$$


We remark that the chemical reaction system (\ref{eq:massactionsystem}) does not typically have full rank, and hence its vanishing locus is not zero-dimensional. In particular, there may be equations that are linearly dependent. This is easily seen by factoring the system (\ref{eq:massactionsystem}) in another way. It is sometimes factored as $f(\bfx) = N \diag({\boldsymbol\kappa}) \bfx^B$ where $\diag({\boldsymbol\kappa})$ is a diagonal matrix with diagonal entries given by ${\boldsymbol\kappa}$ and the columns of $B$ are $\bfy_i$ if $\bfy_i \to \bfy_j$ is a reaction. The matrix $N$ is called the \textit{stoichiometric matrix} and its columns are $\bfy_j - \bfy_i$ whenever $\bfy_i \to \bfy_j$ is a reaction. The column span of $N$ is known in the literature as the \textit{stoichiometric subspace}, it is a linear subspace of $\R^s$.

The redundancy of the equations in the system (\ref{eq:massactionsystem}) are due to the elements of the left kernel of $N$, i.e. if $W N = 0$ then $W f(\bfx) = 0$. Antidifferentiating gives rise to the linear equations $W\bfx = c$ called \textit{conservation laws}. The vectors $\bfw$ belonging to the left kernel of $N$ are referred to as \textit{conservation law vectors} and the set of all such vectors is a linear subspace of $\R^s$ is referred to as the \textit{linear space of conservation laws}.

If redundant equations in the system (\ref{eq:massactionsystem}) are replaced by a conservation law, the new system is full rank. For the chemical reaction network in Figure \ref{fig:crn_example}, we have
$$\begin{array}{cc}
   N = \begin{pmatrix}
       -1 & 1\\
       -1 & 1\\
       2 & -2\\
   \end{pmatrix}, \text{ and}  & W = \begin{pmatrix}
       1 & -1 & 0\\
       0 & 2 & 1
   \end{pmatrix} 
\end{array}$$
and the new system is 
\begin{align*}
    0 &= x_A - x_B - c_1\\
    0 &= 2x_B + x_C - c_2\\
    0 &= 2\kappa_1 x_Ax_B - 2\kappa_2 x_C^2.
\end{align*}


In the present work, we are interested in chemical reaction networks where the steady-state ideal is a binomial ideal. Occasionally the differential equations arising from mass action kinetics on the network are themselves binomial, as is the case in Example \ref{ex:Subdivision} in the next section. However it can also happen that these differential equations are not binomial, but ideal combinations of them are so that their steady-state ideal is generated by binomials. In this case, we say that the network has \emph{binomial steady-states}. The following condition, introduced by Perez Millan, Dickenstein, Shiu, and Conradi in \cite{PDSC2012}, is a sufficient condition for the network to have binomial steady-states.

\begin{condition}\label{cond:toricsteadystates}
For a chemical reaction system given by a network $G$ with $m$ complexes and reaction rate constants $\kappa_{ij}$, let $\Sigma$ denote its complex-to-species rate matrix, and set $d = \dim(\ker(\Sigma))$. We say that the chemical reaction system satisfies Condition \ref{cond:toricsteadystates}, if there exists a partition $I_1, \ldots, I_d$ of $\{1,\ldots,m\}$ and a basis $\bfb^1,\ldots,\bfb^d$ of $\ker(\Sigma)$ with $\supp(\bfb^i) = I_i$.
\end{condition}

We will refer to networks as \emph{PDSC networks} if they satisy Condition \ref{cond:toricsteadystates}, since PDSC are the initials of the authors of \cite{PDSC2012} where this condition was introduced. These authors prove the following result about PDSC networks.

\begin{theorem}[\cite{PDSC2012}, Theorem~3.3]
Let $G$ be a PDSC network as described in Condition \ref{cond:toricsteadystates}. Then the steady-state ideal $I_G$ is generated by the binomials of the form
\[
b^j_{j_1} \bfx^{\bfy_{j_2}} - b^j_{j_2} \bfx^{\bfy_{j_1}}
\]
for all $j_1, j_2 \in I_j$ and all $j \in [d]$.
\end{theorem}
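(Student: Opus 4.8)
The plan is to compute the row space of $\Sigma$ explicitly and then exploit the fact that the steady-state ideal depends only on this row space. Write $\mu := \bfx^{Y^t} = (\bfx^{\bfy_1},\dots,\bfx^{\bfy_m})^t$ for the vector of complex monomials, so that by definition $I_G$ is the ideal generated by the entries of $\Sigma\,\mu$, i.e.\ by the pairings $\langle r,\mu\rangle$ as $r$ runs over the rows of $\Sigma$. The first step is the elementary observation that if $R$ is any real matrix with $m$ columns having the same row space as $\Sigma$, then $\langle\,(\Sigma\mu)_i\,\rangle = \langle\,(R\mu)_i\,\rangle$, because each entry of $R\mu$ is an $\R$-linear combination of the entries of $\Sigma\mu$ and vice versa. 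Consequently $I_G = \langle\,\langle r,\mu\rangle : r\in S\,\rangle$ for any set $S$ that spans the row space of $\Sigma$.

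The second step identifies this row space via Condition~\ref{cond:toricsteadystates}. Since $\operatorname{rowspace}(\Sigma) = (\ker\Sigma)^{\perp}$ inside $\R^m$ and $\ker\Sigma$ has the basis $\bfb^1,\dots,\bfb^d$ with pairwise disjoint supports $\supp(\bfb^j) = I_j$ partitioning $[m]$, we may use the coordinate decomposition $\R^m = \bigoplus_{j=1}^d \R^{I_j}$, under which $\ker\Sigma = \bigoplus_{j} \R\,\bfb^j$ with $\R\,\bfb^j \subseteq \R^{I_j}$. Taking orthogonal complements blockwise gives $\operatorname{rowspace}(\Sigma) = \bigoplus_{j=1}^d H_j$, where $H_j := \{\, v\in\R^{I_j} : \langle v,\bfb^j\rangle = 0 \,\}$ is a hyperplane of $\R^{I_j}$, of dimension $|I_j|-1$.

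The key step is to produce a convenient spanning set for each $H_j$. For $j_1,j_2\in I_j$ put $v^j_{j_1,j_2} := b^j_{j_1}\bfe_{j_2} - b^j_{j_2}\bfe_{j_1} \in \R^{I_j}$; then $\langle v^j_{j_1,j_2},\bfb^j\rangle = b^j_{j_1}b^j_{j_2} - b^j_{j_2}b^j_{j_1} = 0$, so $v^j_{j_1,j_2}\in H_j$. Fixing one index $k\in I_j$ and letting $j_2$ range over $I_j\setminus\{k\}$, the $|I_j|-1$ vectors $v^j_{k,j_2}$ are linearly independent — this is precisely where the support hypothesis $\supp(\bfb^j) = I_j$ is used, since it forces $b^j_k\neq 0$, so that $v^j_{k,j_2}$ really does involve the coordinate $\bfe_{j_2}$ with nonzero coefficient — and hence they span the $(|I_j|-1)$-dimensional space $H_j$. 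Letting $S$ be the union over $j$ of the vectors $v^j_{j_1,j_2}$, we obtain a spanning set of $\operatorname{rowspace}(\Sigma)$, and $\langle v^j_{j_1,j_2},\mu\rangle = b^j_{j_1}\bfx^{\bfy_{j_2}} - b^j_{j_2}\bfx^{\bfy_{j_1}}$. Combining with the first step yields $I_G = \langle\, b^j_{j_1}\bfx^{\bfy_{j_2}} - b^j_{j_2}\bfx^{\bfy_{j_1}} : j_1,j_2\in I_j,\ j\in[d] \,\rangle$.

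I expect the only genuine subtlety to be the bookkeeping in the last step: one has to verify that the $v^j_{j_1,j_2}$ actually span $H_j$ rather than merely lie in it, and that these spanning sets fit together correctly across the blocks $\R^{I_j}$. The dimension count $\sum_{j=1}^d(|I_j|-1) = m-d$, which equals the codimension of $\ker\Sigma$ in $\R^m$, serves as a sanity check. Everything else is linear algebra together with the definition of the steady-state ideal.
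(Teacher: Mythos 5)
Your proof is correct. The paper does not reprove this statement---it is imported from \cite{PDSC2012}---and your argument (the steady-state ideal depends only on the row space of $\Sigma$, which is the orthogonal complement of $\ker{\Sigma}$ and hence, by the disjoint-support hypothesis, decomposes blockwise into the hyperplanes of $\R^{I_j}$ spanned by the vectors $b^j_{j_1}\bfe_{j_2} - b^j_{j_2}\bfe_{j_1}$) is essentially the original proof of Theorem~3.3 there.
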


Note that for a fixed $j \in [d]$, the binomials of the form $b^j_{j_1} \bfx^{\bfy_{j_2}} - b^j_{j_2} \bfx^{\bfy_{j_1}}$ for $j_1, j_2 \in I_j$ are generated by the $\# I_j - 1$ binomials
\[
b^j_{j'} \bfx^{\bfy_{j_2}} - b^j_{j_2} \bfx^{\bfy_{j'}}
\]
where $j'$ is fixed and $j_2 \in I_j \setminus \{j'\}$. This observation yields the following corollary.

\begin{corollary}\label{cor:PDSCgens}
Let $G$ be a PDSC network with $m$ complexes and with $\dim (\ker (\Sigma)) = d$. Then $I_G$ has a generating set consisting of $m-d$ binomials.
\end{corollary}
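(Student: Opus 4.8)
The plan is to derive this directly from the preceding theorem (Theorem~3.3 of \cite{PDSC2012}) together with the observation recorded immediately above the corollary, so that the argument reduces to one elementary ideal-membership identity plus a counting step. First I would invoke that theorem to write $I_G = \bigl( b^j_{j_1}\bfx^{\bfy_{j_2}} - b^j_{j_2}\bfx^{\bfy_{j_1}} : j \in [d],\ j_1, j_2 \in I_j \bigr)$. Then, for each block index $j$, I would fix a distinguished element $j' \in I_j$ and single out the $\#I_j - 1$ binomials $g^j_{j_2} := b^j_{j'}\bfx^{\bfy_{j_2}} - b^j_{j_2}\bfx^{\bfy_{j'}}$ for $j_2 \in I_j \setminus \{j'\}$. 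The crucial point is that $b^j_{j'} \neq 0$, since $\supp(\bfb^j) = I_j$ by Condition~\ref{cond:toricsteadystates}, so $b^j_{j'}$ is a nonzero scalar and hence a unit in the polynomial ring.

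The key step is to show that, for each fixed $j$, the binomials $g^j_{j_2}$ generate all the binomials $b^j_{j_1}\bfx^{\bfy_{j_2}} - b^j_{j_2}\bfx^{\bfy_{j_1}}$ with $j_1, j_2 \in I_j$. The cases $j_1 = j_2$ (the zero binomial) and $j_1 = j'$ or $j_2 = j'$ (which recover $\pm g^j_{j_2}$ or $\pm g^j_{j_1}$) are immediate. For distinct $j_1, j_2 \in I_j \setminus \{j'\}$ I would use the identity
\[
b^j_{j_2}\, g^j_{j_1} - b^j_{j_1}\, g^j_{j_2} = b^j_{j'}\bigl( b^j_{j_2}\bfx^{\bfy_{j_1}} - b^j_{j_1}\bfx^{\bfy_{j_2}} \bigr),
\]
whose left-hand side lies in the ideal $\bigl( g^j_{j_2} : j_2 \in I_j \setminus \{j'\} \bigr)$; dividing through by the unit $b^j_{j'}$ shows that $b^j_{j_1}\bfx^{\bfy_{j_2}} - b^j_{j_2}\bfx^{\bfy_{j_1}}$ (up to sign) lies in that ideal as well. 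Since each $g^j_{j_2}$ is itself among the generators supplied by the theorem, it follows that $I_G = \bigl( g^j_{j_2} : j \in [d],\ j_2 \in I_j \setminus \{j'\} \bigr)$.

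Finally I would count: because $I_1, \ldots, I_d$ is a partition of $\{1,\ldots,m\}$, this generating set has $\sum_{j=1}^d (\#I_j - 1) = \bigl(\sum_{j=1}^d \#I_j\bigr) - d = m - d$ elements. I do not expect a genuine obstacle here; the only subtlety is the appeal to $b^j_{j'} \neq 0$ needed to justify dividing by $b^j_{j'}$, and this is exactly where the support condition $\supp(\bfb^j) = I_j$ on the basis $\bfb^1, \ldots, \bfb^d$ from Condition~\ref{cond:toricsteadystates} enters the argument.
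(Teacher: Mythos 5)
Your proposal is correct and follows essentially the same route as the paper: the paper's corollary is derived from exactly the observation that, for each block $I_j$ with distinguished element $j'$, the $\#I_j - 1$ binomials $b^j_{j'}\bfx^{\bfy_{j_2}} - b^j_{j_2}\bfx^{\bfy_{j'}}$ generate all binomials indexed by pairs in $I_j$, and then one counts $\sum_j(\#I_j - 1) = m - d$. You merely make explicit the syzygy $b^j_{j_2}g^j_{j_1} - b^j_{j_1}g^j_{j_2} = b^j_{j'}(b^j_{j_2}\bfx^{\bfy_{j_1}} - b^j_{j_1}\bfx^{\bfy_{j_2}})$ and the role of $b^j_{j'}\neq 0$, which the paper leaves implicit.
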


\subsection{Mixed volumes.}

In this section, we introduce the concept of the mixed volume of a set of polytopes and their applications for counting solutions to systems of polynomial equations. We first introduce the mixed volume for any set of integer polytopes. Then, we introduce the \emph{Newton polytope} of a polynomial and state the Bernstein-Khovanskii-Kouchnirenko 
(BKK) Theorem which bounds the number of solutions to a polynomial system in terms of the mixed volume of its Newton polytopes. We conclude the section with a discussion of the techniques used to compute the mixed volume.

Let $P_1, \dots, P_r$ be polytopes in $\R^r$. Their \emph{Minkowski sum} is the polytope in $\R^r$,
\[
P_1 + \dots + P_r := \{\bfv_1 + \dots + \bfv_r \mid \bfv_i \in P_i \text{ for all } i \}.
\]
Consider the volume of the Minkowski sum, $\mu_1 P_1 + \dots + \mu_r P_r$ for some positive $\mu_i$. In fact, this volume is a polynomial in the variables $\mu_1,\dots,\mu_r$ \cite[Theorem~5.1.7]{schneider1993}. 

\begin{definition}
    The \emph{mixed volume} of $P_1,\dots,P_r$, denoted $\mvol(P_1,\dots,P_r)$, is the coefficient of $\prod_{i=1}^r \mu_i$ in the polynomial $\vol(\mu_1 P_1 + \dots + \mu_r P_r)$.
\end{definition}

For background on mixed volumes, we refer the reader to \cite{betke1992} and \cite{schneider1993}. For their applications to solving sparse polynomial systems, \cite{bernstein1975} and \cite{huber1995} are good starting points.

Let $f \in \C[x_1^{\pm},\dots,x_r^{\pm}]$ be a Laurent polynomial with complex coefficients. 
Its \emph{support}, denoted $\supp(f)$, 
is the set of all $\bfy \in \Z^r$ such that $\bfx^\bfy$ has nonzero coefficient in $f$. 
The \emph{Newton polytope}, denoted $\Newt(f)$, 
is the convex hull of the support of $f$; that is,
\[
\Newt(f):= \conv\{\bfy \mid \bfy \in \supp(f)\}.
\]
This allows us to define the notion of the mixed  volume of a square system of Laurent polynomials.

\begin{definition}
    Let $F = \{f_1 = 0, \dots, f_r =0\}$ be a system of $r$ Laurent polynomials over $\C$ in $r$ variables. The \emph{mixed volume of} $F$, denoted $\mvol(F)$, is the mixed volume of the Newton polytopes of its polynomials; that is,
    \[
    \mvol(F) := \mvol(\Newt(f_1),\dots, \Newt(f_r)).
    \]
\end{definition}

The following theorem, known as the Bernstein-Khovanskii-Kouchnirenko (BKK) bound, relates the number of solutions of a square polynomial system to its mixed volume.

\begin{theorem}[BKK Bound \cite{bernstein1975,kouch1976}]
Let $p_1, \dots, p_r$ be a polynomial system in the Laurent polynomial ring $\C[x_1^{\pm 1},\dots,x_r^{\pm 1}]$. This system has at most
\[
\mvol(\Newt(p_1),\dots, \Newt(p_r))
\]
solutions in $(\C^*)^n$, with equality if the coefficients of the polynomials $p_i$ are sufficiently generic given their support. 
\end{theorem}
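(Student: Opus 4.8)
The plan is to prove the statement by a toric (polyhedral) homotopy argument, following Bernstein \cite{bernstein1975} and its refinement by Huber--Sturmfels \cite{huber1995}; Bernstein's original induction on $r$ is an alternative, but the homotopy route fits the language of fine mixed subdivisions used above. First I would reduce to the generic case. For fixed supports $S_i = \supp(p_i)$, the number of isolated solutions in $(\C^*)^r$ is an upper semicontinuous function of the coefficient vector $(c_{i,\bfa})$, so it suffices to exhibit \emph{one} choice of coefficients with exactly $\mvol(\Newt(p_1),\dots,\Newt(p_r))$ isolated solutions and to argue this value is never exceeded. The upper bound for arbitrary coefficients then follows, and the ``equality for sufficiently generic coefficients'' assertion is precisely what the construction in the generic case delivers. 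The base case $r=1$ is immediate: a univariate Laurent polynomial supported on $\{a,\dots,b\}$ with nonzero extreme coefficients has exactly $b-a$ roots in $\C^*$, and $b-a$ is the length of $\Newt(p_1)$, i.e. $\mvol(\Newt(p_1))$.

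Next I would set up the homotopy. Choose generic lifting functions $\omega_i \colon S_i \to \R$ and form
\[
P_i(\bfx,t) \;=\; \sum_{\bfa \in S_i} c_{i,\bfa}\, t^{\omega_i(\bfa)}\, \bfx^{\bfa}, \qquad i = 1,\dots,r,
\]
so that $P_i(\bfx,1) = p_i(\bfx)$. For generic $\omega_i$, the lower hull of the lifted Minkowski sum induces a fine mixed subdivision of $\Newt(p_1) + \dots + \Newt(p_r)$ whose cells of type $(1,\dots,1)$ are Minkowski sums $C = C_1 + \dots + C_r$ with each $C_i$ an edge of $\Newt(p_i)$, necessarily a full-dimensional parallelotope. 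I would then expand the solution branches $\bfx(t)$ of the deformed system near $t=0$ as Puiseux series $x_j(t) = y_j\, t^{\alpha_j}(1 + o(1))$: the admissible exponent vectors $\boldsymbol\alpha = (\alpha_1,\dots,\alpha_r)$ are exactly the inner normals of the lower facets of the subdivision, and for the facet corresponding to a mixed cell $C$ the leading coefficients $\bfy \in (\C^*)^r$ must satisfy the associated initial system $\mathrm{in}_{\boldsymbol\alpha}(P_1) = \dots = \mathrm{in}_{\boldsymbol\alpha}(P_r) = 0$, a system of $r$ binomials, the $i$-th supported on the edge $C_i$.

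The heart of the count is a local computation at each mixed cell. Writing $C_i = \conv\{\bfu_i,\bfv_i\}$, the binomial system associated to $C$ has exactly $\lvert\det(\bfv_1 - \bfu_1,\dots,\bfv_r - \bfu_r)\rvert$ solutions in $(\C^*)^r$, namely the volume $\vol(C)$ of the parallelotope; this is seen by a monomial change of coordinates (Smith normal form of the matrix of edge directions) that turns the system into equations of the form $y_i^{k_i} = \text{const}$. Distinct mixed cells give disjoint families of branches, and path-following shows each branch extends to a unique isolated solution of $p_1 = \dots = p_r = 0$ at $t=1$ with no branch leaving the torus for generic coefficients. Summing over all mixed cells yields $\sum_C \vol(C)$, which equals $\mvol(\Newt(p_1),\dots,\Newt(p_r))$ since in a fine mixed subdivision only the cells of type $(1,\dots,1)$ contribute to the mixed volume and the remaining cells contribute zero.

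I expect the main obstacle to be the genericity bookkeeping: showing that for sufficiently generic coefficients every solution of the target system is the endpoint of exactly one homotopy branch — no solution is ``lost at infinity'' inside the torus, and no two branches collide or become singular — and, dually, that the initial systems attached to the non-mixed lower facets have no solutions in $(\C^*)^r$. This is where the convexity of the subdivision and the genericity of both the lifts $\omega_i$ and the coefficients $c_{i,\bfa}$ enter essentially; everything else reduces to the edge-by-edge binomial count above and the combinatorial identity expressing $\mvol$ as the sum of the volumes of the mixed cells of a fine mixed subdivision.
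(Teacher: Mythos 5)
The paper does not prove this statement: it is quoted as background and attributed to Bernstein and Kouchnirenko, so there is no in-text argument to compare against. Your sketch is a correct outline of the standard polyhedral-homotopy proof in the style of Huber--Sturmfels, and it is in fact the proof most consonant with how the paper later uses fine mixed subdivisions (their Theorem on mixed cells of type $(1,\dots,1)$ is exactly the combinatorial identity $\mvol = \sum_C \vol(C)$ that closes your count). Two small points of precision. First, the reduction to the generic case rests on the fact that an isolated solution in $(\C^*)^r$ has positive local intersection multiplicity and therefore persists (inside a small torus neighborhood) under perturbation of the coefficients; distinct isolated solutions then inject into the generic solution set, which is the semicontinuity you need --- worth stating explicitly, since the raw number of isolated solutions is not semicontinuous in both directions. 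Second, the non-mixed lower cells need less than you suggest: in a \emph{fine} mixed subdivision every cell not of type $(1,\dots,1)$ has some component that is a single point, so the corresponding initial system contains a monomial and automatically has no zeros in the torus; only the genericity of the lifts (to get fineness) is used there, not genericity of the coefficients. The genuinely delicate step is the one you identify, namely that every solution over generic $t$ is reached by exactly one Puiseux branch from $t=0$; the clean way to handle it is to count solutions of the deformed system over the algebraically closed field of Puiseux series, where every solution has a valuation vector that must be an inner normal to a lower facet. With those caveats the argument is complete in outline and matches the cited literature.
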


The coefficients of the polynomials arising from a chemical reaction network are not always generic since the reaction rates can appear as coefficients in more than one equation in the system. So the mixed volume of the mass-action system is simply an upper bound on the number of complex steady-states with no zero entries without the guarantee of equality in general.

One standard way to compute the mixed volume of a square polynomial system $f_1,\dots,f_r$ is to compute a fine mixed subdivision of the set of supports of each $f_i$. We introduce the relevant definitions and theorems for square polynomial systems following the notation of \cite{CLS2014} and \cite{huber1995}. Denote by $\Acal_i$ the support of the polynomial $f_i$, and let $\Acal = (\Acal_1,\dots,\Acal_r)$. A \emph{cell} of $\Acal$ is any tuple of the form $(C^1,\dots,C^r)$ where each $C^i$ is a non-empty subset of $\Acal_i$. For any cell $C = (C^1,\dots,C^r)$, we denote by $\conv( C)$ the Minkowski sum, $\sum_{i=1}^r \conv(C^i)$. The \emph{type} of $C$ is $\textsf{type(C)} := (\dim \conv (C^1), \dots, \dim \conv (C^r)).$

\begin{definition}
    A set $S = \{S_1,\dots,S_\ell \}$ consisting of cells of $\Acal$ is a \emph{subdivision} if
    \begin{itemize}
        \item $\dim \conv( S_i) = r$ for all $i \in [\ell]$,
        \item $\conv(S_i) \cap \conv(S_j)$ is a face of both $\conv(S_i)$ and $\conv(S_j)$ for all $i,j \in [\ell]$, and
        \item $\conv(\Acal) = \cup_{i=1}^\ell \conv(S_i)$.
    \end{itemize}
    For each cell in $S$, let $S_i = (S_i^{(1)},\dots,S_i^{(r)})$.
    This subdivision is \emph{mixed} if for each $S_j \in S$, we have
    \[
    \sum_{i=1}^r \dim \conv(S_j^{(i)}) = r.
    \]
    Finally, a subdivision is a \emph{fine mixed subdivision} if for each $S_j \in S$, we have that
    \[
    \sum_{i=1}^r (\#S_j^{(i)} -1) = r.
    \]
\end{definition}

This definition can be interpreted geometrically as giving a subdivision of the Minkowski sum of the Newton polytopes of each $f_i$. In particular, if $S = \{S_1, \dots, S_\ell\}$ is a fine mixed subdivision of $\mathcal{A}$, then the set of polytopes $\{ \conv(S_1) ,\dots, \conv(S_\ell)\}$ is a subdivision of $\sum_{i=1}^r \Newt(f_i)$. For this reason, we refer to a fine mixed subdivision of the tuple $\mathcal{A}$ of supports of the polynomials and of the Minkowski sum of their Newton polytopes interchangeably.  The following theorem relates the mixed volume of this polynomial system to the cells of this subdivision that are Minkowski sums of line segments. It follows directly from Theorem 2.4 of \cite{huber1995} or Proposition 12 of \cite{CLS2014} and the definition of the mixed volume of a system.

\begin{theorem}\label{thm:FMS}
Let $f_1, \dots, f_r$ be a square polynomial system in $\C[x_1,\dots,x_r]$ and let $\mathcal{A} = (\mathcal{A}_1, \dots, \mathcal{A}_r)$ where $\mathcal{A}_i$ consists of all exponent vectors in the support of $f_i$. Let $S = \{S_1, \dots, S_\ell\}$ be a fine mixed subdivision of $\mathcal{A}$. Then the mixed volume of this system is
\[
\sum_{\substack{S_i \in S \\ \textsf{type}(S_i) = (1,\dots,1)}} \vol(\conv(S_i)).
\]
\end{theorem}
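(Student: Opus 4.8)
The plan is to promote a fine mixed subdivision of $\Acal$ from a subdivision of the single Minkowski sum $P:=\sum_{i=1}^r\Newt(f_i)$ to one that remains a subdivision after each summand polytope is independently dilated, and then to read off the coefficient of $\mu_1\cdots\mu_r$ from the resulting identity between volume polynomials. Write $P_i:=\Newt(f_i)$, let $S=\{S_1,\dots,S_\ell\}$ be the given fine mixed subdivision, and write $S_j=(S_j^{(1)},\dots,S_j^{(r)})$.

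The key input is that for all positive reals $\mu_1,\dots,\mu_r$ the polytopes
\[
R_j(\mu)\ :=\ \mu_1\,\conv\big(S_j^{(1)}\big)+\cdots+\mu_r\,\conv\big(S_j^{(r)}\big),\qquad j=1,\dots,\ell,
\]
form a polyhedral subdivision of $\mu_1P_1+\cdots+\mu_rP_r$. For $\mu=\mathbf 1$ this is precisely the assertion that $\{\conv(S_j)\}_j$ subdivides $P$; for general $\mu$ it says that the combinatorial structure of $S$ is stable under dilation of the summands. For a \emph{coherent} (regular) fine mixed subdivision this is elementary: $S$ arises by lifting each $\Acal_i$ by a generic function $\omega_i$, forming the Minkowski sum of the lifted configurations, and projecting its lower hull; replacing this lifted sum by the $\mu$-weighted one projects to a subdivision of $\sum_i\mu_iP_i$ whose cells are exactly the $R_j(\mu)$. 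For an arbitrary fine mixed subdivision the stability statement is the content of \cite[Theorem~2.4]{huber1995} (equivalently \cite[Proposition~12]{CLS2014}, or the Cayley trick). Granting it, additivity of volume over a subdivision yields the polynomial identity
\[
\vol\Big(\sum_{i=1}^r\mu_iP_i\Big)\ =\ \sum_{j=1}^\ell\vol\big(R_j(\mu)\big).
\]

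Next I would expand each term on the right. Fix $j$ and set $d_i:=\dim\conv(S_j^{(i)})$; the \emph{mixed} hypothesis gives $\sum_i d_i=r$, and $\dim R_j(\mu)=\dim\conv(S_j)=r$. By the multilinear expansion of volume, $\vol(R_j(\mu))$ is a homogeneous polynomial of degree $r$ in $\mu$ in which the exponent of each $\mu_i$ is at most $d_i$: a monomial $\prod_i\mu_i^{a_i}$ with nonzero coefficient requires, by Minkowski's positivity criterion for mixed volumes, that $\conv(S_j^{(i)})$ contain $a_i$ edges with linearly independent directions for every $i$. Since $\sum_i a_i=r=\sum_i d_i$ and $a_i\le d_i$, the only surviving monomial is $\prod_i\mu_i^{d_i}$, and evaluating at $\mu=\mathbf 1$ shows its coefficient is $\vol(\conv(S_j))$. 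Therefore the coefficient of $\mu_1\cdots\mu_r$ in $\vol(R_j(\mu))$ is $\vol(\conv(S_j))$ if $d_1=\cdots=d_r=1$, that is $\textsf{type}(S_j)=(1,\dots,1)$, and is $0$ otherwise.

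Extracting the coefficient of $\mu_1\cdots\mu_r$ from both sides of the last display finishes the proof: on the left it is $\mvol(P_1,\dots,P_r)=\mvol(F)$ by definition, and on the right it is $\sum_{\textsf{type}(S_j)=(1,\dots,1)}\vol(\conv(S_j))$. I would note in passing that only the mixed condition enters; fineness merely forces each $\conv(S_j^{(i)})$ to be a simplex and is not used. The one genuinely nontrivial step is the dilation-stability of a fine mixed subdivision in the second paragraph — trivial for coherent subdivisions but requiring \cite{huber1995} in general; the rest is bookkeeping with the multilinear expansion of volume.
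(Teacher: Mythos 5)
Your proof is correct, and it is essentially the standard Huber--Sturmfels argument that the paper itself invokes without proof (the paper simply states that the theorem ``follows directly from Theorem~2.4 of \cite{huber1995} or Proposition~12 of \cite{CLS2014}''): dilation-stability of the subdivision, additivity of volume, and extraction of the $\mu_1\cdots\mu_r$ coefficient via the vanishing criterion for mixed volumes. You correctly isolate the dilation-stability of the cells $R_j(\mu)$ as the one nontrivial input and attribute it appropriately, so there is nothing to add beyond noting that your write-up supplies the details the paper delegates to its references.
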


\begin{example}[Species-overlapping Cycle]\label{ex:Subdivision} Consider the directed cycle with three complexes and three species given by:
\begin{center}
\begin{tikzpicture}[->,scale=1]
    \node (AB) at (90:1cm) {$A+B$};
    \node (A) at (-30:1cm) {$B+C$};
    \node (B) at (210:1cm) {$A+C$};
    
    \draw (55:1cm)  arc (55:-10:1cm) node[midway,right] {$\kappa_1$};
    \draw (-50:1cm) arc (-50:-130:1cm) node[midway,below] {$\kappa_2$};
    \draw (190:1cm) arc (190:125:1cm) node[midway,left] {$\kappa_3$};
    \end{tikzpicture}
\end{center}

We call this network a \emph{species-overlapping cycle} and characterize the mixed volumes of these types networks in Section 4. The system of ordinary differential equations arising from this network is
\begin{align*}
    f_A & = \kappa_2 x_B x_C - \kappa_1 x_A x_B \\
    f_B &= \kappa_3 x_A x_C - \kappa_2 x_B x_C \\
    f_C &= \kappa_1 x_A x_B - \kappa_3 x_A x_C.
\end{align*}
Note that this is a binomial system which is redundant as $f_A + f_B + f_C = 0$. It has a single conservation law which states that $x_A + x_B + x_C = c$ for some constant $c$. So we consider a fine mixed subdivision of the set $\mathcal{A} = \{\mathcal{A}_1, \mathcal{A}_2, \mathcal{A}_3 \}$  where
\begin{align*}
\mathcal{A}_1 &= \{\bfe_2 + \bfe_3, \bfe_1 + \bfe_2\}, \\
\mathcal{A}_2 &= \{ \bfe_2 + \bfe_3, \bfe_1 + \bfe_2 \}, \text{ and} \\
\mathcal{A}_3 & = \{ \mathbf{0}, \bfe_1, \bfe_2, \bfe_3 \}.
\end{align*}

Consider the collection of cells,
\begin{align*}
    S_1 & = \bigl\{ \mathcal{A}_1, \mathcal{A}_2, \{ \mathbf{0}, \bfe_1 \} \bigr\} \\
    S_2 & = \bigl\{ \{\bfe_1 + \bfe_2 \}, \{\bfe_2 + \bfe_3\}, \mathcal{A}_3 \bigr\} \\
    S_3 & = \bigl\{ \{ \bfe_1 + \bfe_2 \}, \mathcal{A}_2, \{ \mathbf{0}, \bfe_1, \bfe_3 \} \bigr\} \\
    S_4 & = \bigl\{ \mathcal{A}_1, \{ \bfe_2 + \bfe_3\}, \{ \mathbf{0}, \bfe_2, \bfe_3 \} \bigr\}. 
\end{align*}
One can check that this forms a fine mixed subdivision of $\Acal$ as depicted in Figure \ref{fig:Subdivision}. Note that $S_1$ is the only cell of type $(1,1,1)$ and its volume is 1, so by Theorem \ref{thm:FMS}, the mixed volume of this system is 1.
\end{example}

\begin{figure}
    \centering
    \includegraphics[scale=.25]{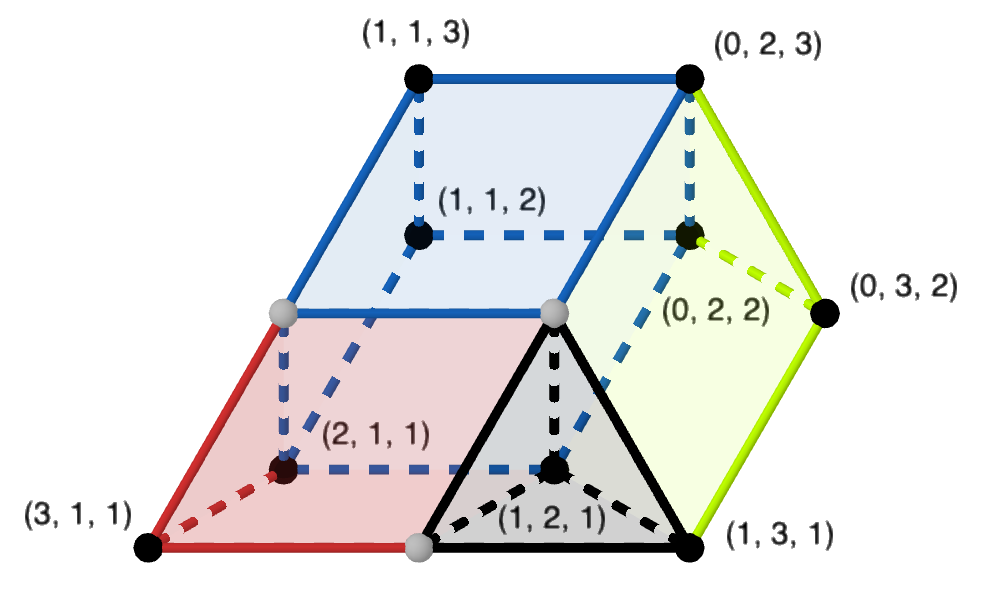}
    \caption{The fine mixed subdivision described in Example \ref{ex:Subdivision}. The cells are $S_1$ in blue, $S_2$ in black, $S_3$ in red and $S_4$ in green.}
    \label{fig:Subdivision}
\end{figure}

We note here that the mixed volume is not an intrinsic property of a chemical reaction network. In fact, one can obtain different mixed volumes from different choices of generators of the steady-state ideal and/or conservation laws. We illustrate an example of this below.

\begin{example}\label{ex:GenSet}
Consider the network pictured in Figure \ref{fig:GenSet}. The system of ordinary differential equations arising from this system is
\begin{align*}
    f_A &= -\kappa_1 x_A \\
    f_B &= \kappa_1 x_A - \kappa_2 x_B x_C + \kappa_3 x_C^2 \\
    f_C &= \kappa_1 x_A + \kappa_2 x_B x_C -  \kappa_3 x_C^2.
\end{align*}
It has a single conservation law, which requires that $w := 2 x_A + x_B + x_C - c = 0$ for some generic constant $c$. Note that the steady-state ideal $I_G = \langle f_A, f_B \rangle = \langle f_B, f_C \rangle$. The system $\{f_A, f_C, w\}=0$ has mixed volume $0$ since $f_A$ is a monomial. Indeed, any system including a monomial has mixed volume zero since there can be no cell of type $(1,\dots,1)$ in any fine mixed subdivision of the Minkowski sum of the Newton polytopes in the system. However,  the system $\{f_B, f_C, w \}$ contains no monomials, and we can compute using the PHCpack package for Macaulay2 \cite{M2,gross2013interfacing, verschelde1999algorithm} that its mixed volume is 2.
\end{example}

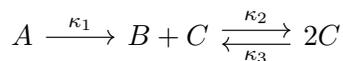
\begin{figure}[h]
    \centering
    \begin{tikzcd}
        A \ar[r,"\kappa_1"] & B + C \ar[r,"\kappa_2",shift left] & 2C \ar[l, "\kappa_3", shift left]
    \end{tikzcd}
    \caption{A network where the mixed volume depends on the choice of generating set.}
    \label{fig:GenSet}
\end{figure}

\subsection{Squareness of PDSC Networks}

In order to compute the mixed volume of a system of steady-state equations and conservation laws, that system must be square. It is not necessarily straightforward to check whether the steady-state ideal has a binomial generating set that makes the system augmented by conservation laws square; however, in the case of PDSC networks, we obtain a sufficient condition as follows.

Let $G$ be a network. A \emph{linkage class} of $G$ is a connected component of its underlying directed graph. Let $\ell$ denote the number of linkage classes of $G$. Following \cite{gunawardena2003}, we define its \emph{deficiency} by
\[
\delta := \dim (\ker Y^t \cap \im A_\kappa^t).
\]
Note that this is equal to $\dim \ker Y^t A_\kappa^t - \dim \ker A^t_\kappa.$ There are other definitions of deficiency in the literature which are not always equivalent. In particular, it is often defined as $m - \ell - \rank(N)$.  Proposition 5.1 of \cite{gunawardena2003} states that if every linkage class has exactly one terminal strong linkage class, then these definitions coincide; that is, in this case, $\rank(N) = m - \ell - \dim \ker Y^t A_\kappa^t + \dim \ker A^t_\kappa$.

\begin{proposition}
Let $G$ be a PDSC network such that every linkage class contains exactly one terminal strong linkage class. Then the system consisting of the binomial generators guaranteed by \ref{cor:PDSCgens} augmented by a basis of conservation laws is a square system.
\end{proposition}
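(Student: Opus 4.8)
The plan is to check that the augmented system has exactly $s$ equations in $s$ variables, where $s$ is the number of species. The variables are the species concentrations $x_1,\dots,x_s$, so squareness means there are $s$ equations. By Corollary~\ref{cor:PDSCgens} the binomial generating set of $I_G$ has $m-d$ elements, where $d=\dim\ker\Sigma$, and a basis of the linear space of conservation laws, i.e.\ of the left kernel of the stoichiometric matrix $N$, has $s-\rank N$ elements. Hence the augmented system has $(m-d)+(s-\rank N)$ equations, and squareness is equivalent to the numerical identity $\rank N=m-d$.

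To establish this identity, I would apply Proposition~5.1 of \cite{gunawardena2003}. It holds under our hypothesis that every linkage class has exactly one terminal strong linkage class, and, as recorded just before the statement, it gives $\rank N=m-\ell-d+t$, where $\ell$ is the number of linkage classes, $d=\dim\ker\Sigma$, and $t:=\dim\ker A_\kappa^t$ (using $\Sigma=Y^tA_\kappa^t$). Comparing with the target identity, squareness therefore reduces to the claim $t=\ell$.

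For this, note first that $\dim\ker A_\kappa^t=\dim\ker A_\kappa$, since a matrix and its transpose have equal rank, and that $\ker A_\kappa$ always contains the indicator vectors $\mathbbm{1}_{I_1},\dots,\mathbbm{1}_{I_\ell}$ of the $\ell$ linkage classes (the restriction of $A_\kappa$ to any linkage class still has zero row sums), so $t\ge\ell$. For the reverse inequality, I would invoke the standard structural fact that $\dim\ker A_\kappa$ equals the total number of terminal strong linkage classes of $G$: one way to see this is that $A_\kappa^t$ has nonnegative off-diagonal entries and zero column sums, hence generates a continuous-time Markov chain whose space of stationary distributions is spanned by the stationary distributions supported on the individual terminal strong linkage classes (equivalently, apply the Matrix--Tree theorem to $A_\kappa$). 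Under our hypothesis this count equals $\ell$, so $t=\ell$ and therefore $\rank N=m-d$, as needed. I expect this structural fact about $\ker A_\kappa$ to be the only nonroutine ingredient; since it is classical in chemical reaction network theory, the write-up would cite it rather than reprove it, and everything else is bookkeeping with rank--nullity.
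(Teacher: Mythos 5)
Your proposal is correct and follows essentially the same route as the paper: both reduce squareness to the identity $\rank{N} = \rank{\Sigma}$ via Proposition~5.1 of Gunawardena and the classical fact (cited in the paper to Feinberg--Horn) that the nullity of the weighted Laplacian equals the number of terminal strong linkage classes, which under the hypothesis equals the number of linkage classes $\ell$. The only cosmetic difference is that you additionally verify the lower bound $\dim \ker{A_\kappa} \geq \ell$ via indicator vectors of linkage classes, which is subsumed by the cited equality.
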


\begin{proof}
The nullity of the weighted Laplacian $A_\kappa$ is equal to the number of terminal strong linkage classes \cite{feinberg1977chemical}, and hence equal to the number of linkage classes. Thus by Proposition 5.1 of \cite{gunawardena2003}, we have
\begin{align*}
    \rank(N) &= m - \ell - \dim \ker Y^t A_\kappa^t + \dim \ker A^t_\kappa \\
    &= m - \ell - (m - \rank Y^t A_\kappa^t) + \ell \\
    & = \rank Y^t A_\kappa^t.
\end{align*}
Thus the nullity of $N$ and $Y^t A_\kappa^t$ are both some fixed $d$. Let $\{\bfw_1, \dots, \bfw_d\}$ be a basis for $\ker N$. Let $f_1, \dots, f_{s-d}$ be the generating set for $I_G$ guaranteed by Corollary \ref{cor:PDSCgens}. Then the system obtained by augmenting $f_1, \dots, f_{s-d}$ with the conservation equations associated to $\bfw_1,\dots,\bfw_d$ is a square system, as needed.
\end{proof}



\section{Mixed Volumes of Partitionable Binomial Networks}

Now we will focus on a specific class of binomial networks, particularly \emph{partitionable networks}, and we show that, for these networks, computing the mixed volume reduces to computing the volume of a single mixed cell. We further show that one need not actually find such a mixed cell -- in fact, in these cases, the mixed volume can be computed without computing a fine mixed subdivision. 
In order to define paritionable networks, we need the following algebraic notion of multihomogeneity.

\begin{definition}
Let $I$ be an ideal in $\mathbbm{k}[x_1,\dots,x_s]$. Let $\bfw_1,\dots, \bfw_k \in \Z^s$ be integer weight vectors. Then $I$ is \emph{multihomogeneous} with respect to the \emph{multigrading} specified by $\bfw_1, \dots, \bfw_k$ if it has a generating set $f_1,\dots,f_{s-k}$ such that for each $f_i = \sum_{\bfa \in \mathcal A_i} \beta_{\bfa} \bfx^\bfa$, we have $\bfa \cdot \bfw_j = \bfb \cdot \bfw_j$ for all $j \in [k]$ and $\bfa, \bfb \in \mathcal A_i$. 
\end{definition}

We note that multihomogeneity with respect to $\bfw_1,\dots,\bfw_k$ is a property of $\textsf{span}\{\bfw_1,\dots,\bfw_k\}$ and does not depend on the choice of spanning set of this vector space. Indeed, an ideal is multihomogenous with respect to $\bfw_1,\dots,\bfw_k$ if and only if it is homogeneous with respect to the weight order specified by any $\bfw$ in their span. 

We can now define a \emph{partitionable network}, where the structure of the conservation laws leads to very nice geometry on the level of mixed volumes.

\begin{definition}\label{def:partitionable}
A network $G$ is \emph{partitionable} if 
\begin{enumerate}
\item  there are $0/1$ vectors $\bfw_1,\dots,\bfw_k$ with disjoint support such that the linear space of all conservation laws of $G$ is equal to $\spn{\{\bfw_1,\dots,\bfw_k\} }$ and
\item the steady-state ideal is multihomogeneous with respect to the multigrading induced by the conservation laws. 
\end{enumerate}
\end{definition}

\begin{rmk} The multihomogeneity condition for a network to be partitionable has a nice geometric interpretation. This condition is equivalent to the affine hull of the newton polytope $\Newt(f_i)$ being parallel to the stoichiometric subspace for each generator $f_i$ of the steady-state ideal.
\end{rmk}

Observe that the first of the conditions in Definition \ref{def:partitionable} is the more restrictive one; in the proof of Theorem \ref{thm:MixedCellTheorem}, it places significant restrictions on the form of the mixed cells that can appear in a fine mixed subdivision. The second condition is more mild. In particular, it is satisfied if the evaluations of a conservation law on each complex are equal. Noteably, the undirected graph underlying the network is connected; in this case, the network is referred to as \emph{weakly connected}.

\begin{proposition}
If $G$ is weakly connected,  then its steady-state ideal $I_G$ is multihomogeneous with respect to the multigrading induced by the conservation law vectors.
\end{proposition}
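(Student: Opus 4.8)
The plan is to show that each generator of the steady-state ideal coming from the mass-action system is homogeneous with respect to the multigrading, by tracking how a conservation law vector pairs with the exponent vectors appearing in a single ODE. Recall that the $k$th component of $f(\bfx) = \Sigma \bfx^{Y^t}$ has support contained in $\{\bfy_i : \kappa_{ij} \neq 0 \text{ or } \kappa_{ji} \neq 0 \text{ for some } j\}$, i.e. in the set of source complexes of reactions incident to complex $k$ in a suitable sense; more precisely, writing $f(\bfx) = N\,\diag(\boldsymbol\kappa)\,\bfx^B$, the $k$th entry $f_k$ is a signed combination of monomials $\bfx^{\bfy_i}$ over reactions $\bfy_i \to \bfy_j$ whose stoichiometric vector $\bfy_j - \bfy_i$ has nonzero $k$th coordinate. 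So the first step is to reduce the claim to the following: for any two complexes $\bfy_i, \bfy_j$ that both appear as source monomials in a common component $f_k$, and any conservation law vector $\bfw$, we have $\bfw \cdot \bfy_i = \bfw \cdot \bfy_j$.

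Next I would exploit weak connectivity together with the defining property of conservation law vectors. Since $\bfw$ lies in the left kernel of $N$, for every reaction $\bfy_i \to \bfy_j$ we have $\bfw \cdot (\bfy_j - \bfy_i) = 0$, i.e. $\bfw \cdot \bfy_i = \bfw \cdot \bfy_j$. Hence the value $\bfw \cdot \bfy_i$ is constant along every edge of the underlying graph $G$, and since $G$ is weakly connected, $\bfw \cdot \bfy_i$ takes a single common value, call it $c_\bfw$, over \emph{all} complexes $i \in [m]$. In particular, any two monomials appearing in the full system $f(\bfx)$ (not just within one component) pair equally with $\bfw$, which immediately gives the reduction in the previous paragraph and more. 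The one subtlety is the choice of generating set: the definition of multihomogeneity asks for \emph{some} generating set $f_1,\dots,f_{s-k}$ with the homogeneity property, so I would take the generators to be a maximal linearly independent subset of the components $f_1,\dots,f_s$ of the mass-action system (augmenting, if a minimal generating set of smaller size is wanted, is unnecessary — any $s - \rank N$ independent components among the $f_k$ generate $I_G$ over the Laurent ring, and each inherits the homogeneity from the monomials it involves). Each such generator has all its exponent vectors among $\{\bfy_1,\dots,\bfy_m\}$, and by the previous step they all pair to $c_\bfw$ with $\bfw$, so $\bfa \cdot \bfw = \bfb \cdot \bfw$ for all $\bfa, \bfb \in \mathcal A_k$, which is exactly multihomogeneity with respect to the span of the conservation law vectors.

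I expect the only real point needing care is the bookkeeping in the first paragraph: pinning down precisely which monomials occur in $f_k$ and confirming they are all among the source complexes $\bfy_i$ (so that the "constant along edges" argument applies verbatim), and making sure the chosen generators genuinely generate $I_G$ while retaining exponent vectors drawn from the complex set. Both are routine once the factorization $f = N\,\diag(\boldsymbol\kappa)\,\bfx^B$ is in hand. No fine-mixed-subdivision machinery is needed here; the proposition is a direct consequence of $\bfw \in \ker N^t$ and connectivity of the complex graph.
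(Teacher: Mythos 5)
Your proposal is correct and follows essentially the same route as the paper: a conservation law vector $\bfw$ lies in the left kernel of $N$, so $\bfw\cdot\bfy_i=\bfw\cdot\bfy_j$ along every reaction, weak connectivity propagates this to a single common value over all complexes, and since every monomial of the mass-action system is $\bfx^{\bfy_i}$ for some complex, the steady-state equations themselves form a multihomogeneous generating set. Your extra care in selecting a linearly independent subset of the components to match the cardinality in the definition of multihomogeneity is a reasonable (and slightly more careful) refinement of what the paper does, but not a different argument.
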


\begin{proof}
Let $G$ be a weakly connected network with complexes $\bfy_1,\dots,\bfy_m$ and conservation law vectors $\bfw_1,\dots,\bfw_k$. Then by definition of a conservation law, for each reaction $\bfy_i \rightarrow \bfy_j$ and each conservation law vector $\bfw_\ell$, we have $\bfy_i \cdot \bfw_\ell = \bfy_j \cdot \bfw_\ell$. Moreover, since $G$ is weakly connected, there is an undirected path between each pair of complexes. Thus this equality holds for any pair of complexes $\bfy_i$ and $\bfy_j$ and any conservation law vector $\bfw_\ell$. Every term of the steady-state equations $d\dot{\bfx}/dt = 0$ that generate $I_G$ is of the form $\bfx^{\bfy_i}$ for some complex $\bfy_i$. Thus the steady-state equations form a generating set of $I_G$ that is multihomogeneous with respect to the multigrading specified by the conservation law vectors.
\end{proof}

\begin{example}
    As a non-example, the Edelstein network
\[
\begin{array}{c}
    \begin{tikzcd}
        A \ar[r,"\kappa_1",shift left] & 2A \ar[l,"\kappa_2",shift left]
    \end{tikzcd} \\
    \begin{tikzcd}
        A + B \ar[r,"\kappa_3",shift left] & C \ar[r,"\kappa_5",shift left] \ar[l,"\kappa_4",shift left] & B \ar[l,"\kappa_6",shift left]
    \end{tikzcd}
\end{array}
\]
is not partitionable. It has one conservation law vector $\bfw = (0,1,1)$ and if we consider the two exponent vectors $\bfa = (2,0,0)$ and $\bfb = (1,1,0)$ of the polynomial $f_A(\bfx) = \kappa_1x_A - \kappa_2x_A^2 -\kappa_3x_Ax_B + \kappa_4x_C$, we compute $\bfa \cdot \bfw = 0$ while $\bfb \cdot \bfw = 1$.
\end{example}

The main result of this section is the following characterization of the mixed volume of a partitionable binomial reaction network. In particular, we find that any fine mixed subdivision of the Minkowski sum of the Newton polytopes of such a network has at most one cell of type $(1,\dots,1)$. This allows us to easily compute the mixed volume of such a network, since if a type $(1,\dots,1)$ cell exists, then the mixed volume is the volume of this single cell. If no such mixed cell exists, then the mixed volume is zero.

\begin{theorem}\label{thm:MixedCellTheorem}
Let $G$ be a paritionable binomial network with $s$ species and $k$ conservation law vectors $\bfw_1,\dots,\bfw_k$ with disjoint support. Suppose that its steady-state ideal has a binomial generating set $f_1,\dots,f_{s-k}$ with exactly $s-k$ elements. Then any fine mixed subdivision of $\sum_{i=1}^{s-k} \Newt(f_i) + \sum_{j=1}^k \Newt(\bfw_j \bfx - c_j)$ has at most one cell of type $(1,\dots,1)$. This cell, if it exists, is a translate of some parallelotope of the form
\[
\sum_{i=1}^{s-k} \Newt(f_i) + \sum_{j=1}^k \conv(\mathbf{0}, \bfe_{\alpha_j}),
\]
where $\alpha_j$ is in the support of $\bfw_j$ for all $j$.
\end{theorem}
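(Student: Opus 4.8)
The plan is to determine precisely which cells of a fine mixed subdivision can have type $(1,\dots,1)$ --- by Theorem~\ref{thm:FMS} these are exactly the cells contributing to the mixed volume --- then to identify their shape, and finally to show two of them cannot coexist. Write $M := \sum_{i\le s-k}\Newt(f_i) + \sum_{j=1}^{k}\Newt(\bfw_j\bfx - c_j)$ and $B_j := \supp(\bfw_j)$. First I would record the Newton polytopes involved. If some $f_i$ is a monomial then $\Newt(f_i)$ is a point and no cell of a fine mixed subdivision can have type $(1,\dots,1)$, so the theorem is vacuous; hence I may assume each $f_i$ is a genuine binomial, so $\Newt(f_i) = \conv(\Acal_i)$ is a segment with endpoints the two exponent vectors of $f_i$. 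Since the $B_j$ are pairwise disjoint, $\Newt(\bfw_j\bfx - c_j) = \conv(\{\mathbf 0\}\cup\{\bfe_\ell : \ell \in B_j\})$ is a simplex. Now let $S = (S^{(1)},\dots,S^{(s)})$ be a cell of type $(1,\dots,1)$ in a fine mixed subdivision of $M$. For $i \le s-k$, $\conv(S^{(i)})$ must be one-dimensional while $S^{(i)} \subseteq \Acal_i$ has only two elements, so $S^{(i)} = \Acal_i$; the fine-mixed condition $\sum_i (\#S^{(i)} - 1) = s$ then forces $\#S^{(s-k+j)} = 2$ for every $j$, so each $S^{(s-k+j)}$ is a two-element subset of the vertex set $\{\mathbf 0\}\cup\{\bfe_\ell : \ell \in B_j\}$.

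Second I would use partitionability to show $S^{(s-k+j)} = \{\mathbf 0, \bfe_{\alpha_j}\}$ for some $\alpha_j \in B_j$, ruling out the possibility $\{\bfe_\ell, \bfe_{\ell'}\}$. The space of conservation laws is the left kernel of the stoichiometric matrix and, by hypothesis, equals $\spn{\{\bfw_1,\dots,\bfw_k\}}$; hence the stoichiometric subspace $\mathcal S$, its orthogonal complement, has dimension $s-k$. Multihomogeneity of $I_G$ with respect to the $\bfw_j$ (the second part of Definition~\ref{def:partitionable}) says any two exponents $\bfa, \bfb$ of $f_i$ satisfy $(\bfa - \bfb)\cdot\bfw_j = 0$ for all $j$, i.e.\ the edge direction of $\Newt(f_i)$ lies in $\mathcal S$. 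Because every cell of a subdivision is $s$-dimensional, $\conv(S) = \sum_{i\le s-k}\Newt(f_i) + \sum_j \conv(S^{(s-k+j)})$, a Minkowski sum of $s$ segments, must be $s$-dimensional, so its $s$ edge directions are linearly independent; in particular the $s-k$ of them coming from the $\Newt(f_i)$ form a basis of $\mathcal S$. If some $S^{(s-k+j)} = \{\bfe_\ell, \bfe_{\ell'}\}$ with $\ell, \ell' \in B_j$, then $\bfe_\ell - \bfe_{\ell'}$ is orthogonal to every $\bfw_{j'}$, hence lies in $\mathcal S$; adjoined to the basis just found it yields $s-k+1$ linearly dependent vectors among the $s$ edge directions --- a contradiction. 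Therefore $S^{(s-k+j)} = \{\mathbf 0, \bfe_{\alpha_j}\}$ with $\alpha_j \in B_j$, and $\conv(S) = \sum_{i\le s-k}\Newt(f_i) + \sum_j \conv(\mathbf 0, \bfe_{\alpha_j})$ is precisely a parallelotope of the form claimed in the statement.

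Third, for uniqueness, I would set $Z := \sum_{i\le s-k}\Newt(f_i)$. By the second step, whenever a type $(1,\dots,1)$ cell exists the edge directions of the $\Newt(f_i)$ form a basis of $\mathcal S$, so $Z$ is an $(s-k)$-dimensional parallelotope whose affine hull is a translate of $\mathcal S$, and $Z \subseteq \conv(S)$ for every type $(1,\dots,1)$ cell $\conv(S)$. Moreover, distinct type $(1,\dots,1)$ cells have distinct convex hulls, because the edge directions of $\conv(S) = Z + \sum_j \conv(\mathbf 0, \bfe_{\alpha_j})$ that fail to lie in $\mathcal S$ are exactly $\pm\bfe_{\alpha_1},\dots,\pm\bfe_{\alpha_k}$, which recovers the tuple $(\alpha_1,\dots,\alpha_k)$. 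Now suppose $\conv(S) = Z + \sum_j \conv(\mathbf 0, \bfe_{\alpha_j})$ and $\conv(S') = Z + \sum_j \conv(\mathbf 0, \bfe_{\alpha'_j})$ are two type $(1,\dots,1)$ cells with $(\alpha_j) \ne (\alpha'_j)$; then $u := \sum_j (\bfe_{\alpha_j} - \bfe_{\alpha'_j})$ is a nonzero vector of $\mathcal S$. Choose $p \in \relint(Z)$ and then $t \in (0,1)$ small enough that $p + tu \in \relint(Z)$ as well. The point $q := p + t\sum_j \bfe_{\alpha_j}$ lies in the relative interior of $\conv(S)$, since relative interiors add under Minkowski sums and $t\bfe_{\alpha_j} \in \relint(\conv(\mathbf 0, \bfe_{\alpha_j}))$; but $q = (p + tu) + t\sum_j \bfe_{\alpha'_j}$ shows $q$ also lies in the relative interior of $\conv(S')$. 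Since distinct cells of a subdivision meet only along common proper faces, their relative interiors are disjoint --- a contradiction. Hence at most one cell of type $(1,\dots,1)$ exists, which together with the parallelotope form from the second step completes the proof.

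I expect the uniqueness step to be the main obstacle. The bookkeeping in the first two steps is essentially forced by dimension counts; it is the ``at most one'' assertion that genuinely uses the rigidity of the situation, and the key point there is that $Z$ is contained in every type $(1,\dots,1)$ cell while the only remaining freedom --- the choice of the vertices $\bfe_{\alpha_j}$ --- is too tightly bound to $Z$ to permit two overlapping parallelotopes. A secondary subtlety is that it is exactly the multihomogeneity half of partitionability that yields $\Newt(f_i) \parallel \mathcal S$, and this parallelism is precisely what eliminates the $\{\bfe_\ell, \bfe_{\ell'}\}$-type conservation-law faces once full-dimensionality of the cells is imposed.
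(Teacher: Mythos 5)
Your proof is correct, and its first two steps (forcing $S^{(i)} = \Acal_i$ for the binomial supports, then using multihomogeneity plus a dimension count to exclude conservation-law edges of the form $\conv(\bfe_\ell,\bfe_{\ell'})$) match the paper's argument, merely phrased via linear independence of the $s$ edge directions rather than via containment of the cell in a codimension-$k$ affine subspace. Where you genuinely diverge is the uniqueness step. The paper restricts the subdivision to the face of the Minkowski sum minimizing $\bfw_2,\dots,\bfw_k$, observes that the two candidate cells restrict there to polytopes sharing the facet $\sum_i \Newt(f_i)$ on the same side of the hyperplane $\{\bfw_1\bfp = 0\}$, and invokes a separate lemma (Proposition~\ref{prop:InteriorIntersection}) to conclude their interiors meet. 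You instead exhibit an explicit common interior point: since $u = \sum_j(\bfe_{\alpha_j}-\bfe_{\alpha'_j})$ is a nonzero vector of the stoichiometric subspace, which is the direction space of $Z=\sum_i\Newt(f_i)$, the point $p + t\sum_j\bfe_{\alpha_j}$ lies in both relative interiors for suitable $p\in\relint(Z)$ and small $t>0$. Your route is more self-contained --- it needs neither the face-restriction argument nor Proposition~\ref{prop:InteriorIntersection} --- and it treats all $k$ differing summands at once instead of reducing to a single one; the price is that you must separately verify that distinct index tuples give distinct polytopes, which you do via the edge directions transverse to the stoichiometric subspace. One small point worth patching: Definition~\ref{def:partitionable} only guarantees that \emph{some} generating set of $I_G$ is multihomogeneous, so to apply multihomogeneity to the given binomials $f_i$ you should add, as the paper does, that an ideal containing no monomials can only contain multihomogeneous binomials (otherwise its graded components would be monomials lying in the ideal).
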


Before we prove Theorem \ref{thm:MixedCellTheorem}, we require the following well-known proposition regarding the Minkowski sums of a polytope with two different edges.

\begin{proposition}\label{prop:InteriorIntersection}
Let $P$ and $Q$ be $d$-dimensional polytopes in $\R^d$ that share a facet $F$. Suppose further that $F$ is the face of both that maximizes the same linear functional $\bfa$. Then $P$ and $Q$ intersect on their interiors.
\end{proposition}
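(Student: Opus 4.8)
The plan is to observe that the hypothesis forces $P$ and $Q$ to lie on the \emph{same} closed side of the common supporting hyperplane of $F$, which is exactly the degenerate configuration that makes two polytopes sharing a facet overlap. Concretely, I would set $c := \max_{\bfx \in P}\bfa\cdot\bfx$ and note that, since $F$ is the face of $P$ maximizing $\bfa$, the functional $\bfa$ is identically equal to $c$ on $F$; because $F$ is also the face of $Q$ maximizing $\bfa$, we get $\max_{\bfx\in Q}\bfa\cdot\bfx = c$ as well. Hence, with $H := \{\bfx : \bfa\cdot\bfx = c\}$ and $H^- := \{\bfx : \bfa\cdot\bfx \le c\}$, both $P$ and $Q$ are contained in $H^-$, and $F = P\cap H = Q\cap H$. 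Since $F$ is a facet, $H^-$ is an irredundant facet-defining halfspace of each of $P$ and $Q$.

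Next I would fix a point $\bfp$ in the relative interior of $F$, which exists because $F$, being a facet of the full-dimensional polytope $P$, has dimension $d-1$. The crux is the claim that $\bfp - \varepsilon\bfa \in \int P$ for all sufficiently small $\varepsilon > 0$. To prove it, write $P = H^- \cap H_2^- \cap \cdots \cap H_t^-$ as the intersection of its irredundant facet-defining halfspaces, with $H_i^- = \{\bfa_i \cdot \bfx \le c_i\}$ for $i \ge 2$. A relative interior point of $F$ lies strictly inside each such $H_i^-$: otherwise $\bfp$ would lie on $H_i$, hence in the face $F \cap H_i$, which is a \emph{proper} face of $F$ (two distinct facets of a polytope cannot contain one another), contradicting $\bfp \in \relint F$. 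Since there are finitely many such halfspaces, $\bfp - \varepsilon\bfa$ still satisfies these strict inequalities for small $\varepsilon$; and $\bfa\cdot(\bfp - \varepsilon\bfa) = c - \varepsilon\|\bfa\|^2 < c$, so it also strictly satisfies the $H^-$ inequality. Thus $\bfp - \varepsilon\bfa$ satisfies every facet inequality of $P$ strictly, so it lies in $\int P$. The identical argument applied to $Q$ — whose facet $F$ sits in the very same supporting hyperplane $H$ — gives $\bfp - \varepsilon\bfa \in \int Q$ for all small $\varepsilon > 0$. Choosing $\varepsilon$ below both thresholds produces a single point in $\int P \cap \int Q$, and the proposition follows.

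The only step needing genuine care — and hence the main, if mild, obstacle — is the assertion that a relative-interior point of $F$ is strictly interior to every other facet halfspace of $P$; this is where one must invoke the face lattice of the polytope (that $F\cap H_i$ is a proper face of $F$ for $i\ge 2$) rather than mere convexity. Everything else is elementary: the translation of the hypothesis into ``$P$ and $Q$ on the same side of $H$,'' and a continuity argument for finitely many strict linear inequalities. A mild variant of the same argument would instead pick any $\bfq \in \int P$, note $\bfa \cdot \bfq < c$, and slide along the segment $[\bfq,\bfp]$ to land just short of $\bfp$ inside both interiors; I find the displacement-by-$-\varepsilon\bfa$ formulation cleaner since it makes the symmetry between $P$ and $Q$ transparent.
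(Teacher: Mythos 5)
Your proposal is correct and follows essentially the same route as the paper's proof: place both polytopes in the common halfspace $\{\bfa\cdot\bfx\le c\}$, take a relative interior point of $F$, and push it by $-\varepsilon\bfa$ into both interiors using the strictness of the remaining facet inequalities. If anything, you justify the one nontrivial step (that a point of $\relint F$ strictly satisfies every other facet inequality) more explicitly than the paper does.
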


\begin{proof}
We have that $F = \{\bfx \in P \mid \bfa \cdot \bfx = b \} =  \{\bfx \in Q \mid \bfa \cdot \bfx = b \}$ and that $P$ and $Q$ are contained in the closed halfspace $\{ \bfx \mid \bfa \bfx \leq b \}$. Moreover, $F$ is a facet of both polytopes. So we may write minimal H-representations
\[
P = \{ \bfx \mid A \bfx \leq \bfb \} \qquad \text{ and } \qquad  Q = \{ \bfx \mid C \bfx \leq \bfd \}
\]
where the first rows of $A$ and $C$, $\bfa_1$ and $\bfc_1$ respectively, are both equal to $\bfa$ and the first entries of $\bfb$ and $\bfd$ are both $b$. Since $F$ is a facet, we may further assume that for all $i >1$ and all $\bfx \in F$, $\bfa_i \cdot \bfx < b_i$ and $\bfc_i \cdot \bfx < d_i$.

Let $\bfz \in \relint(F)$. We construct an $\epsilon > 0$ such that $\bfz - \epsilon \bfa^t \in \int(P) \cap \int(Q)$. Consider the rows $\bfa_i, \bfc_j$ of $A$ and $C$ respectively for $i,j > 1$. Let $\epsilon > 0$ be such that $-\epsilon \bfa_i \cdot \bfa^t < b_i - \bfa_i \cdot \bfz$ and $-\epsilon \bfc_j \cdot \bfa^t < d_j - \bfc_j \cdot \bfz$ for all $i, j > 1$. Such an $\epsilon$ exists since $b_i - \bfa_i \cdot \bfz$ and $d_j - \bfc_j \cdot \bfz$ are strictly positive for all $i, j > 1$. Then 
\[
\bfz - \epsilon \bfa^t \in \{\bfx \mid A \bfx < \bfb \} \cap \{\bfx \mid C \bfx < \bfd \},
\]
which is exactly the intersection of the interiors of $P$ and $Q$, as needed.
\end{proof}

\begin{proof}[Proof of Theorem \ref{thm:MixedCellTheorem}]
Since the mixed volume is translation invariant, we replace each $\Newt(f_i)$ with its translation to the origin, $P_i := \conv(\mathbf 0, \bfy_1^{(i)} - \bfy_2^{(i)})$, where $f_i = \bfx^{\bfy_1^{(i)}} - \bfx^{\bfy_2^{(i)}}$.
Let $P = \sum_{i=1}^{s-k} P_i + \sum_{j=1}^k \Newt(\bfw_j \bfx - c_j)$.
Each polytope $P_i$ is a line segment since $f_i$ is a binomial. Thus, for any fine mixed subdivision, a mixed cell of type $(1,\dots,1)$ must have each $P_i$ as a summand. 

Further, note that there are two types of edges of each simplex $W_j := \Newt(\bfw_j \bfx - c_j)$; they are of the form $\conv(\mathbf 0, \bfe_{\alpha})$ or $\conv(\bfe_{\alpha}, \bfe_{\beta})$ where $\alpha$ and $\beta$ are in the support of $\bfw_j$. For all $j \in [k]$, if $\alpha$ belongs to the support of $\bfw_j$, then since $G$ is partitionable, $\bfe_\alpha$ belongs to the linear space
\begin{equation*} \displaystyle
\bigcap_{\substack{h=1 \\ h \neq j}}^k \{ \bfp \mid \bfw_h \bfp = 0\}.
\end{equation*}
Moreover, since $G$ is partitionable, the steady-state ideal $I_G$ is multihomogeneous with respect to the multigrading induced by the conservation laws. Since $I_G$ contains no monomials, this implies that each $f_i$ is multihomogeneous with respect to this multigrading as well. Thus $\bfw_h \cdot (\bfy_1^{(i)} - \bfy_2^{(i)}) = 0$ for all $h \in [k]$ and $i \in [s-k]$. 

Let $Q$ be a type $(1,\dots,1)$ mixed cell of a fine mixed subdivision of $P$. Then $Q = \sum_{i=1}^{s-k} P_i + \sum_{j=1}^k E_j$ where each $E_j$ is an edge of $W_j$.
For the sake of contradiction, suppose that $E_j = \conv(\bfe_\alpha, \bfe_\beta)$ for some $j$. 
Then $\sum_{i=1}^{s-k}P_i + E_j$ lies in the codimension $k$ affine linear space
\[
\{\bfp \mid \bfw_j \bfp = 1\} \cap \bigcap_{\substack{h=1 \\ h \neq j}}^k \{ \bfp \mid \bfw_h \bfp = 0\}.
\]
So it has dimension less than or equal to $s-k$. Thus $Q$ has dimension less than or equal to $s-1$, which contradicts that it is a maximal cell of a fine mixed subdivision.

Thus all mixed cells of type $(1,\dots,1)$ are of the form $\sum_{i=1}^{s-k} P_i + \sum_{j=1}^k \conv(\mathbf 0, \bfe_{\alpha_j})$ where $\alpha_j$ belongs to the support of $\bfw_j$. Let $E_{\alpha_j}$ denote $\conv(\mathbf 0, \bfe_{\alpha_j})$.

Now suppose that $Q_1$ and $Q_2$ are distinct mixed cells of this form. They must differ by at least one summand corresponding to edges of some $W_j$. Without loss of generality, suppose that this is $\bfw_1$ and that the summand associated to $\bfw_1$ in $Q_1$ is $E_1$ and the summand associated to $\bfw_1$ in $Q_2$ is $E_2$. Consider the face of $P$ that minimizes the linear functionals $\bfw_j \bfp$ for all $j=2,\dots,k$. This face is $F = \sum_{i=1}^{s-k} P_i + W_1$. The fine mixed subdivision $\mathcal S$ of $P$ restricts to a subdivision $\mathcal{S}'$ of this face via intersection.
Moreover, we have $Q_1 \cap F = \sum_{i=1}^{s-k} P_i + E_1$ and $Q_2 \cap F = \sum_{i=1}^{s-k} P_i + E_2$. 

Now consider these polytopes in the ambient linear space, $\{\bfp \mid \bfw_j \bfp = 0, j=2,\dots,k \}$. The face $F$ is contained in the hyperplane $\{ \bfw_1 \bfp = 0 \}$ and $E_1$ and $E_2$ both lie in the positive halfspace defined by $\bfw_1 \bfp \geq 0$. So by Proposition \ref{prop:InteriorIntersection}, we have that $\relint(\sum_{i=1}^k P_i + E_1) \cap \relint(\sum_{i=1}^k P_i + E_2)$ is nonempty. Moreover, these two polytopes are not equal. So they cannot belong to the same subdivision of $F$. Hence, $Q_1$ and $Q_2$ cannot belong to the same subdivision of $P$. Thus a fine mixed subdivision of $P$ has at most one mixed cell of type $(1,\dots,1)$ and it has the desired form if it exists.
\end{proof}

Consider a partitionable binomial reaction network as in the statement of Theorem \ref{thm:MixedCellTheorem}. We have shown that any fine mixed subdivision of the Minkowski sum of its Newton polytopes has at most one mixed cell of type $(1,\dots,1)$ and described the form of this cell if it exists. Let $\Pi$ denote this parallelotope. If one knows the edges of each Newton polytope that are its Minkowski summands, then its volume can be computed as the determinant of a matrix.

\begin{lemma}\label{lem:PiVolume}
Let $G$ be as in the statement of Theorem \ref{thm:MixedCellTheorem} and suppose that 
\[
\Pi = \sum_{i=1}^{s-k} \Newt(f_i) + \sum_{j=1}^k \conv(\mathbf{0}, \bfe_{\alpha_j})
\]
be the unique type $(1,\dots,1)$ cell of a fine mixed subdivision of the Newton polytopes where each $f_i= \bfx^{\bfy^{(i)}_1} - \bfx^{\bfy^{(i)}_2}$ and where $\alpha_j$ is in the support of $\bfw_j$. Then the mixed volume of the steady-state system $f_1,\dots,f_{s-k}$ augmented by the partitionable conservation laws  the absolute value of the determinant of the $s \times s$ matrix with columns $\bfe_{\alpha_j}$ for $j \in [k]$ and $\bfy^{(i)}_1 - \bfy^{(i)}_2$ for $i \in [s-k]$
\end{lemma}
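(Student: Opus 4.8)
The plan is to combine Theorem~\ref{thm:MixedCellTheorem} with Theorem~\ref{thm:FMS}. By Theorem~\ref{thm:MixedCellTheorem}, the mixed volume of the augmented system equals either $\vol(\Pi)$ when the unique type $(1,\dots,1)$ cell $\Pi$ exists, or $0$ otherwise; in the latter case the determinant in the statement is also zero (the $s$ columns are linearly dependent), so the two sides agree and there is nothing more to prove. Hence the substance of the argument is to show that when $\Pi$ exists, its Euclidean volume equals $\bigl|\det M\bigr|$, where $M$ is the $s\times s$ matrix whose columns are the $\bfe_{\alpha_j}$ for $j\in[k]$ together with the difference vectors $\bfy^{(i)}_1-\bfy^{(i)}_2$ for $i\in[s-k]$.

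The key step is the standard fact that the volume of a parallelotope spanned by vectors $\bfv_1,\dots,\bfv_s$ in $\R^s$ is $|\det[\bfv_1\mid\cdots\mid\bfv_s]|$. First I would recall that, after translating each $\Newt(f_i)$ to the origin as in the proof of Theorem~\ref{thm:MixedCellTheorem}, each summand $\Newt(f_i)$ becomes the segment $\conv(\mathbf 0,\bfy^{(i)}_1-\bfy^{(i)}_2)$, and each $\conv(\mathbf 0,\bfe_{\alpha_j})$ is already a segment from the origin. So $\Pi$ is (a translate of) the Minkowski sum of $s$ segments emanating from $\mathbf 0$, namely the image of the unit cube $[0,1]^s$ under the linear map $M$. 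Then I would invoke translation invariance of volume together with the change-of-variables formula (or simply the determinantal formula for the volume of a parallelotope) to conclude $\vol(\Pi)=|\det M|$.

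The one point that genuinely needs care is confirming that $\Pi$ really is a full-dimensional parallelotope rather than a degenerate ``flat'' Minkowski sum — i.e.\ that the $s$ spanning vectors are linearly independent whenever the type $(1,\dots,1)$ cell exists. This follows because a type $(1,\dots,1)$ cell of a fine mixed subdivision is by definition $s$-dimensional, so its $s$ one-dimensional Minkowski summands must span $\R^s$, forcing $M$ to be invertible; this is exactly the dichotomy used in Theorem~\ref{thm:MixedCellTheorem}. I expect this to be the only real obstacle, and it is mild: it is essentially bookkeeping to match the combinatorial condition ``type $(1,\dots,1)$'' with the algebraic condition ``$\det M\neq 0$''. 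Everything else is the elementary linear algebra of parallelotope volumes, so the proof will be short.

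\begin{proof}[Proof of Lemma~\ref{lem:PiVolume}]
By Theorem~\ref{thm:MixedCellTheorem}, a fine mixed subdivision of $\sum_{i=1}^{s-k}\Newt(f_i)+\sum_{j=1}^k\Newt(\bfw_j\bfx-c_j)$ has at most one cell of type $(1,\dots,1)$. By Theorem~\ref{thm:FMS}, the mixed volume of the system equals the sum of the volumes of its type $(1,\dots,1)$ cells, hence it equals $\vol(\Pi)$ if such a cell $\Pi$ exists and $0$ otherwise. Since mixed volume is translation invariant, we may translate each $\Newt(f_i)$ to the origin, replacing it by the segment $\conv(\mathbf 0,\bfy^{(i)}_1-\bfy^{(i)}_2)$, as in the proof of Theorem~\ref{thm:MixedCellTheorem}. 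Then $\Pi$ is a translate of
\[
\sum_{i=1}^{s-k}\conv(\mathbf 0,\bfy^{(i)}_1-\bfy^{(i)}_2)+\sum_{j=1}^k\conv(\mathbf 0,\bfe_{\alpha_j}),
\]
which is the image of the cube $[0,1]^s$ under the linear map given by the matrix $M$ whose columns are $\bfe_{\alpha_j}$ for $j\in[k]$ and $\bfy^{(i)}_1-\bfy^{(i)}_2$ for $i\in[s-k]$. If $\Pi$ exists, it is a cell of type $(1,\dots,1)$ and therefore $s$-dimensional, so its $s$ one-dimensional Minkowski summands are linearly independent and $M$ is invertible; by translation invariance of volume and the determinantal formula for the volume of a parallelotope, $\vol(\Pi)=|\det M|$. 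If no such cell exists, these same $s$ vectors are linearly dependent, so $\det M=0$, and the mixed volume is $0=|\det M|$. In either case the mixed volume equals the absolute value of the determinant of $M$.
\end{proof}
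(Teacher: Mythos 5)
Your proof is correct and takes essentially the same route as the paper: combine Theorem \ref{thm:MixedCellTheorem} with Theorem \ref{thm:FMS}, translate each $\Newt(f_i)$ to the origin so that $\Pi$ becomes the image of the unit cube under $M$, and apply the determinantal formula for the volume of a parallelotope. The only extraneous piece is your final case (``no such cell exists''), which lies outside the lemma's hypothesis and whose claim that the columns of $M$ must then be linearly dependent would itself require a further argument (e.g.\ monotonicity of mixed volumes); since the lemma assumes $\Pi$ exists, you can simply omit it.
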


\begin{proof}
Suppose that this mixed volume is non-zero.
The volume of a polytope is invariant under translation.
To compute the volume of the parallelotope $\Pi$, we translate it to the origin by replacing the edge $\Newt(f_i) = \conv(\bfy_1^{(i)}, \bfy_2^{(i)})$ with $\conv(\mathbf{0}, \bfy_1^{(i)} - \bfy_2^{(i)})$. Then this determinant is the standard formula for the normalized volume of such a parallelotope. By Theorem \ref{thm:MixedCellTheorem}, $\Pi$ is the only mixed cell of type $(1,\dots,1)$ in a fine mixed subdivision of $P$. Thus by Theorem \ref{thm:FMS}, the mixed volume of $G$ is the volume of $\Pi$.
\end{proof}

We conclude this discussion by noting that this determinant does not depend on the choice of the coordinates $\alpha_j$ in the support of $\bfw_j$. So in fact, one can compute the mixed volume of a partitionable binomial reaction network via a simple determinant calculation without computing a fine mixed subdivision. In order to prove this, we state the following lemma.

\begin{lemma}\label{lem:Det}
Let $\bfr_1, \dots, \bfr_{k+1}$ be $s$-dimensional row vectors that sum to the zero vector. Let $\bfq_1, \dots, \bfq_{s-k}$ be arbitrary $s$-dimensional row vectors. Let $R_i$ denote the $s \times s$ matrix with rows $\bfr_1,\dots,\bfr_{k+1},\bfq_{1},\dots,\bfq_{s-k}$ with $\bfr_i$ excluded. Then $\det(R_i) = (-1)^{i-j} \det(R_j)$ for any $i, j \in [k+1].$
\end{lemma}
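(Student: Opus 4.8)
The plan is to reduce the identity to one sequence of elementary row operations followed by counting the sign of a row permutation. Since $(-1)^{i-j} = (-1)^{j-i}$ and the statement is trivial when $i = j$, I may assume $i < j$ without loss of generality. The only hypothesis available is $\bfr_1 + \dots + \bfr_{k+1} = \mathbf{0}$, which I will use in the rearranged form $\bfr_i + \sum_{\ell \neq i, j} \bfr_\ell = -\bfr_j$.

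First I would manipulate $R_j$. Its rows are $\bfr_1, \dots, \bfr_{j-1}, \bfr_{j+1}, \dots, \bfr_{k+1}, \bfq_1, \dots, \bfq_{s-k}$; because $i \neq j$, the vector $\bfr_i$ is among them, and so is every $\bfr_\ell$ with $\ell \notin \{i,j\}$. Adding each such $\bfr_\ell$ to the row $\bfr_i$ is a sequence of elementary row operations that leaves the determinant unchanged and, by the displayed dependency relation, replaces the row $\bfr_i$ by $-\bfr_j$. Pulling the scalar $-1$ out of that row gives $\det(R_j) = -\det(M)$, where $M$ has rows $\bfr_1, \dots, \bfr_{i-1}, \bfr_j, \bfr_{i+1}, \dots, \bfr_{j-1}, \bfr_{j+1}, \dots, \bfr_{k+1}, \bfq_1, \dots, \bfq_{s-k}$ --- that is, exactly the rows of $R_i$, except that $\bfr_j$ occupies the slot previously held by $\bfr_i$ instead of its own.

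Next I would compare $M$ with $R_i$. In $M$ the row $\bfr_j$ sits immediately before $\bfr_{i+1}$, whereas in $R_i$ it sits immediately after $\bfr_{j-1}$; moving it past the $j - i - 1$ intervening rows $\bfr_{i+1}, \dots, \bfr_{j-1}$ is a product of $j - i - 1$ adjacent transpositions, so $\det(M) = (-1)^{j-i-1}\det(R_i)$. Combining the two steps, $\det(R_j) = -(-1)^{j-i-1}\det(R_i) = (-1)^{j-i}\det(R_i) = (-1)^{i-j}\det(R_i)$, which rearranges to the claimed identity $\det(R_i) = (-1)^{i-j}\det(R_j)$ (the relation is symmetric under swapping $i$ and $j$ since $(-1)^{i-j} = (-1)^{j-i}$).

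I expect no real obstacle: the argument is self-contained, and the single delicate point is the sign of the row permutation, which I would confirm on the small case $k+1 = 3$, $i = 1$, $j = 3$ (there the row operation sends $R_3$'s row $\bfr_1$ to $-\bfr_3$, one transposition restores the order $\bfr_2, \bfr_3$, and the two sign changes cancel, matching $(-1)^{1-3} = 1$). If desired, the same computation can be packaged more compactly in the exterior algebra: writing $\det(R_i)\,\omega = \bfr_1 \wedge \cdots \wedge \widehat{\bfr_i} \wedge \cdots \wedge \bfr_{k+1} \wedge \bfq_1 \wedge \cdots \wedge \bfq_{s-k}$ in $\Lambda^s(\R^s) \cong \R\omega$ and substituting $\bfr_i = -\sum_{\ell \neq i}\bfr_\ell$ collapses all but the $-\bfr_j$ term and reduces the claim to reordering a wedge product; I would present whichever version reads more cleanly.
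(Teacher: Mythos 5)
Your proof is correct and follows essentially the same route as the paper's: both exploit the relation $\bfr_i = -\sum_{\ell \neq i}\bfr_\ell$ to turn the row $\bfr_i$ of $R_j$ into $-\bfr_j$ (you via determinant-preserving row additions, the paper via multilinearity and the alternating property), then extract the sign $-1$ and count the $|i-j|-1$ adjacent transpositions needed to reorder the rows. Your sign bookkeeping, including the small sanity check, matches the claimed identity, so no changes are needed.
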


\begin{proof}
    For any $i,j$, we have 
    \[\bfr_i = -\sum_{\substack{\ell=1 \\ \ell \neq i}}^{k+1} \bfr_\ell.
    \]
    Replacing $\bfr_i$ with this expression in $R_j$ and expanding using multilinearity and the alternating property of the determinant yields that $\det(R_j) = - \det(R_j^{(i)})$, where $R_j^{(i)}$ is obtained from $R_j$ by replacing $\bfr_j$ with $\bfr_i$. Then using $i-j-1$ adjacent row swaps to put $\bfr_i$ in the $i$th position yields $R_i$. So $\det(R_j) = (-1)^{i-j} \det(R_i)$, as needed.
\end{proof}

\begin{theorem}\label{thm:main}
Let $G$ be a partitionable binomial reaction network with partitionable conservation laws $\bfw_1,\dots,\bfw_k$ and exactly $s-k$ defining binomials $f_i$ supported on exponent vectors $\bfy_1^{(i)}$ and $\bfy_2^{(i)}$. Then the mixed volume of the steady-state system $f_1,\dots,f_{s-k}$ augmented by the partitionable conservation laws is either $0$ or the absolute value of the determinant of \emph{any} matrix with columns $\bfy_1^{(i)} - \bfy_2^{(i)}$ for all $i \in [s-k]$ and $\bfe_{\alpha_j}$ for $\alpha_j \in \supp(\bfw_j)$ for each $j \in [k]$.
\end{theorem}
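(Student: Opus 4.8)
The plan is to derive Theorem \ref{thm:main} from Lemma \ref{lem:PiVolume} together with a short linear‑algebra argument showing that the determinant appearing there is insensitive to the chosen coordinates $\alpha_j \in \supp(\bfw_j)$ (and also to the column ordering and to replacing $f_i$ by $-f_i$). First I would invoke Lemma \ref{lem:PiVolume}: if the mixed volume is $0$ there is nothing to prove, so assume it is nonzero. Then the mixed volume equals $|\det M_0|$, where $M_0$ is the $s\times s$ matrix associated to the unique type‑$(1,\dots,1)$ cell, with columns $\bfe_{\alpha_j}$ for $j\in[k]$ (some $\alpha_j\in\supp(\bfw_j)$) and $\bfv_i:=\bfy_1^{(i)}-\bfy_2^{(i)}$ for $i\in[s-k]$; since this number is nonzero, $M_0$ is invertible. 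It then remains to show that every matrix $M$ whose columns are the $\bfv_i$ together with vectors $\bfe_{\alpha_j}$, $\alpha_j\in\supp(\bfw_j)$, has $|\det M| = |\det M_0|$.

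The key structural input is that the $\bfv_i$ form a basis of the stoichiometric subspace. As established in the proof of Theorem \ref{thm:MixedCellTheorem}, partitionability forces $I_G$ to be multihomogeneous with respect to the multigrading from the conservation laws, and since $I_G$ contains no monomials this gives $\bfw_h\cdot\bfv_i=0$ for all $h\in[k]$, $i\in[s-k]$. Hence each $\bfv_i$ lies in the stoichiometric subspace $\Lcal = (\spn\{\bfw_1,\dots,\bfw_k\})^{\perp}$; because $G$ is partitionable the $\bfw_h$ are $0/1$ vectors with pairwise disjoint nonzero supports, hence linearly independent, so $\dim\Lcal = s-k$. Invertibility of $M_0$ makes its columns linearly independent, in particular the $s-k$ vectors $\bfv_1,\dots,\bfv_{s-k}$, which therefore form a basis of $\Lcal$.

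With this in hand, choice independence is immediate. Fix $j$ and take $\alpha,\alpha'\in\supp(\bfw_j)$. Since the $\bfw_h$ are $0/1$ with pairwise disjoint supports, $\bfw_h\cdot(\bfe_\alpha-\bfe_{\alpha'})=0$ for every $h$, so $\bfe_\alpha-\bfe_{\alpha'}\in\Lcal=\spn\{\bfv_1,\dots,\bfv_{s-k}\}$. Consequently, replacing the column $\bfe_{\alpha_j}$ of $M$ by $\bfe_{\alpha_j'}$ alters that column only by a linear combination of the columns $\bfv_1,\dots,\bfv_{s-k}$ of $M$ — an elementary column operation that does not change $\det M$ (equivalently, expand in the $j$th column by multilinearity; the correction term is a determinant whose $j$th column lies in the span of the others, hence vanishes). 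These operations for different $j$ do not interact since they never touch the $\bfv_i$ columns, so $\det M$ has the same value for every choice of the $\alpha_j$; reordering columns or switching an $f_i$ to $-f_i$ only changes the sign, so $|\det M|$ is the same for every matrix of the prescribed form, and by Lemma \ref{lem:PiVolume} equals the mixed volume. (Lemma \ref{lem:Det} is convenient here for bookkeeping the signs produced by the reorderings.)

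The step I expect to be the main obstacle is the second one: recognizing that $\{\bfv_1,\dots,\bfv_{s-k}\}$ is a basis of the stoichiometric subspace, which is precisely where partitionability and the nonvanishing of the mixed volume (i.e.\ invertibility of $M_0$) are both used; everything after that is routine. I would also take care to dispatch the degenerate case — when the mixed volume is $0$ the claim is vacuous — and, if a cleaner statement is wanted, one can additionally note that the mixed volume is positive if and only if $\{\bfv_i\}\cup\{\bfe_{\alpha_j}\}$ spans $\R^s$, so that all matrices of the prescribed shape are simultaneously singular or simultaneously invertible with the same absolute determinant.
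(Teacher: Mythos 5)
Your proposal is correct, and it reaches the conclusion by a genuinely different route for the key ``choice-independence'' step. Both you and the paper begin identically: dispose of the zero case, then invoke Lemma \ref{lem:PiVolume} (via Theorem \ref{thm:MixedCellTheorem}) to identify the mixed volume with $|\det M_\alpha|$ for the particular $\alpha$ realized by the unique mixed cell. Where you diverge is in showing $|\det M_\alpha| = |\det M_\beta|$ for an arbitrary admissible choice $\beta$. The paper works with \emph{rows}: it observes that for each $j$ the rows of the $s\times(s-k)$ matrix of the $\bfv_i = \bfy_1^{(i)}-\bfy_2^{(i)}$ indexed by $\supp(\bfw_j)$ sum to zero, Laplace-expands along the $\bfe_{\alpha_j}$ columns to reduce to deleting one row per block, and then applies Lemma \ref{lem:Det} to show that which row is deleted only affects the sign. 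You instead work with \emph{columns}: you note that $\bfw_h\cdot\bfv_i=0$ places each $\bfv_i$ in the codimension-$k$ space $\Lcal=(\spn\{\bfw_1,\dots,\bfw_k\})^{\perp}$, that invertibility of $M_\alpha$ (from nonvanishing mixed volume) forces $\bfv_1,\dots,\bfv_{s-k}$ to be a basis of $\Lcal$, and that $\bfe_{\alpha_j}-\bfe_{\alpha_j'}\in\Lcal$ because the $\bfw_h$ are $0/1$ with disjoint supports; hence swapping $\bfe_{\alpha_j}$ for $\bfe_{\alpha_j'}$ is an elementary column operation. Your argument is a little cleaner — it avoids the Laplace-expansion bookkeeping and Lemma \ref{lem:Det} entirely, and it even shows the determinant is preserved exactly (not merely up to sign) under changing the $\alpha_j$ — at the mild cost of explicitly using the nonvanishing of the mixed volume to get the spanning statement, whereas the paper's row identity $\sum_{\ell\in\supp(\bfw_j)}\bfr_\ell=0$ holds unconditionally. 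Your closing remark that all admissible matrices are simultaneously singular or simultaneously invertible is also correct (by monotonicity of the mixed volume in each argument) and would make the ``either $0$ or'' phrasing of the statement sharper.
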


\begin{proof}
Suppose that the system $f_i(\bfx) = 0$ for $i \in [s-k]$ and $\bfw_j \bfx = c_j$ for $j \in [k]$ has nonzero mixed volume. Then any fine mixed subdivision of $p$ has a type $(1,\dots,1)$ cell, and by Theorem \ref{thm:MixedCellTheorem}, this cell is unique and of the form
\[
\Pi = \sum_{i=1}^{s-k} \Newt(f_i) + \sum_{j=1}^k \conv(\mathbf{0}, \bfe_{\alpha_j}).
\]
Let $M_{\alpha}$ denote the matrix with columns $\bfy_1^{(i)} - \bfy_2^{(i)}$ for all $i \in [s-k]$ and $\bfe_{\alpha_j}$ for $j \in [k]$. Then by Lemma \ref{lem:PiVolume}, the mixed volume of $G$ is equal to $\pm \det(M_\alpha)$.

It remains to show that if we pick $\beta_j \in \supp(\bfw_j)$ for each $j$, the corresponding matrix has the same determinant up to absolute value; that is, that $\det M_\alpha = \pm \det M_\beta$. Consider the $s \times (s-k)$ matrix $M$ with colums $\bfy^{(i)}_1 - \bfy^{(i)}_2$. Let $\bfr_1,\dots,\bfr_s$ be its rows. Note that for each $j \in [k]$, we have
\[
\sum_{\ell \in \supp(\bfw_j)} \bfr_\ell = 0.
\]
Let $M_{\alpha}'$ denote the matrix obtained by deleting rows $\bfr_{\alpha_1},\dots,\bfr_{\alpha_s}$ from $M$, and similarly for $M'_\beta$. Then by repeatedly applying Laplace expansion along the columns of the form $\bfe_{\alpha_j}$, we see that $\det M_\alpha = \pm \det M'_\alpha$, and similarly, that $\det M_\beta = \pm \det M'_\beta$. Moreover, by applying Lemma \ref{lem:Det} $s$ times to the block of rows $\{ \bfr_{\ell} \mid \ell \in \supp(\bfw_j) \}$ at the $j$th step of the Laplace expansion, we see that
\[
\det(M'_\alpha) = \pm \det(M'_\beta),
\]
as needed.
\end{proof}

\section{Cycles with Binomial Steady-States}

In this section, we investigate the directed cycles, or \emph{cycle networks}, that satisfy the PDSC Condition. We give a characterization of these cycles in terms of edge colorings of the cycle. Then we apply the results of Section 3 to some examples of cycles with binomial steady-states and compute their mixed volumes. 

Let $G$ be a cycle network with $m$ complexes, that is, defined by the reactions $\begin{tikzcd}[column sep = small] \bfy_i \ar[r,"\kappa_i"] & \bfy_{i+1} \end{tikzcd}$ where the indices are taken modulo the set $[m] = \{1,2,\ldots,m\}$. Let $d = \dim \ker \Sigma$ where $\Sigma = Y^t A_{\boldsymbol\kappa}^t$. Since we are fixing the structure of the reaction graph $G$, the Laplacian matrix has the form $$A_{\boldsymbol\kappa}^t = \begin{pmatrix}
-\kappa_1 & & & & & \kappa_m\\
\kappa_1 & -\kappa_2 & & & & \\
& \kappa_2 & -\kappa_3 & & & \\
& & & \ddots & & \\
& & & & -\kappa_{m-1} & \\
& & & & \kappa_{m-1} & -\kappa_m
\end{pmatrix}$$
which has nontrivial kernel; indeed, it contains the non-zero vector $\bfx_{\boldsymbol\kappa} = (\kappa_1^{-1}, \ldots, \kappa_m^{-1})^t$. Thus, for cycle networks, the dimension $d$ of the kernel of $\Sigma = Y^t A_{\boldsymbol\kappa}^t$ is always at least 1.

Given a coloring of the edges of $G$, $\lambda: E(G) \rightarrow C$, and a ``color" $\ell \in C$, the subgraph of $G$ induced by all edges of color $\ell$, denoted $G[\ell]$ is a disjoint union of directed paths if $|C| > 1$ and $G$ if $|C| = 1$. Let $H(\ell)$ denote the set of all source vertices of $G[\ell]$ and let $T(\ell)$ denote the set of all sink vertices of $G[\ell]$. Note that in the case $|C| = 1$, the sets $H(\ell)$ and $T(\ell)$ are both empty. Given a subset $S$ of the complexes of $G$, we shall write $\1_S$ to denote the $m$-dimensional indicator vector for $S$.

\begin{theorem}\label{thrm:toriccycles}
Let $G$ be a directed cycle. Then $G$ is a PDSC network if and only if there exists a surjective coloring $\lambda:E(G) \rightarrow [d]$ such that for all $\ell \in [d]$,
\[
\sum_{\bfy \in H(\ell)} \bfy = \sum_{\bfy \in T(\ell)} \bfy.
\]
\end{theorem}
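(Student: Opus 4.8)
The plan is to unwind the PDSC condition for the special structure of a cycle, using the explicit form of the Laplacian $A_{\boldsymbol\kappa}^t$. Recall that $d = \dim \ker \Sigma$ with $\Sigma = Y^t A_{\boldsymbol\kappa}^t$. The key first step is to relate $\ker \Sigma$ to $\ker A_{\boldsymbol\kappa}^t$ and $\im A_{\boldsymbol\kappa}^t$: since $A_{\boldsymbol\kappa}^t$ for a cycle has a one-dimensional kernel spanned by $\bfx_{\boldsymbol\kappa} = (\kappa_1^{-1},\dots,\kappa_m^{-1})^t$ and rank $m-1$, a vector $\bfv$ lies in $\ker \Sigma$ iff $A_{\boldsymbol\kappa}^t \bfv \in \ker Y^t$. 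I would then observe that $A_{\boldsymbol\kappa}^t \bfv$, for the cyclic Laplacian, has $i$th entry $\kappa_{i-1} v_{i-1} - \kappa_i v_i$, so the image vectors $\bfu = A_{\boldsymbol\kappa}^t \bfv$ are exactly those with $\sum_i \kappa_i^{-1}(\text{running sums})$ consistent — more precisely, the image of $A_{\boldsymbol\kappa}^t$ is the hyperplane $\{\bfu : \sum_i u_i = 0\}$ (row sums of $A_{\boldsymbol\kappa}$ are zero, equivalently column sums of $A_{\boldsymbol\kappa}^t$). So $\bfv \in \ker\Sigma$ iff $A_{\boldsymbol\kappa}^t\bfv$ is a vector summing to zero that lies in $\ker Y^t$, i.e. $\ker \Sigma / \spn\{\bfx_{\boldsymbol\kappa}\} \cong \ker Y^t \cap \{\text{sum zero}\}$, and $d - 1 = \dim(\ker Y^t \cap \im A_{\boldsymbol\kappa}^t) = \delta$, the deficiency.

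Next I would translate the existence of a PDSC partition into combinatorial data. Suppose $G$ is PDSC with partition $I_1,\dots,I_d$ of $[m]$ and basis $\bfb^1,\dots,\bfb^d$ of $\ker\Sigma$ with $\supp(\bfb^\ell) = I_\ell$. Define a coloring $\lambda$ of the edges of the cycle by coloring edge $\bfy_i \to \bfy_{i+1}$ with the unique $\ell$ such that $i \in I_\ell$ (this uses that the $I_\ell$ partition the vertex/index set $[m]$, and we identify edge $i$ with its source index $i$). Then $G[\ell]$ is the union of maximal runs of consecutive indices in $I_\ell$, hence a disjoint union of directed paths (or all of $G$ when $d=1$). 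The crucial computation is to show that the steady-state/kernel condition $\Sigma \bfb^\ell = 0$, i.e. $A_{\boldsymbol\kappa}^t \bfb^\ell \in \ker Y^t$, forces $Y^t A_{\boldsymbol\kappa}^t \bfb^\ell = 0$ to unpack as $\sum_{\bfy \in H(\ell)} \bfy = \sum_{\bfy \in T(\ell)} \bfy$. Here I expect the right choice of $\bfb^\ell$ to be essentially dictated: since $(A_{\boldsymbol\kappa}^t\bfb^\ell)_i = \kappa_{i-1}b^\ell_{i-1} - \kappa_i b^\ell_i$ and $\bfb^\ell$ is supported on the set of indices $I_\ell$ (a union of intervals), the vector $A_{\boldsymbol\kappa}^t \bfb^\ell$ is supported precisely on the boundary indices — the heads and tails of the paths in $G[\ell]$ — with values telescoping within each interval. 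Choosing $b^\ell_i = \kappa_i^{-1}$ within each interval of $I_\ell$ makes $\kappa_{i-1}b^\ell_{i-1} - \kappa_i b^\ell_i = 0$ in the interior, $= -1$ at a head (source) vertex index and $= +1$ just after a tail; the condition $Y^t(A_{\boldsymbol\kappa}^t\bfb^\ell) = 0$ then reads $\sum_{\bfy\in T(\ell)} \bfy - \sum_{\bfy\in H(\ell)}\bfy = 0$, which is the claimed identity. (I need to be careful about the cyclic index shift relating the source/sink vertices of a path to the boundary indices of the interval, and about the degenerate case $d=1$ where $H(\ell)=T(\ell)=\emptyset$ and the condition is vacuous, consistent with $\bfx_{\boldsymbol\kappa}$ always lying in the kernel.)

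For the converse, given a surjective coloring $\lambda:E(G)\to[d]$ with $\sum_{H(\ell)}\bfy = \sum_{T(\ell)}\bfy$ for all $\ell$, I would reverse the construction: let $I_\ell = \{i : \lambda(\bfy_i\to\bfy_{i+1}) = \ell\}$, which partitions $[m]$; define $\bfb^\ell$ supported on $I_\ell$ by $b^\ell_i = \kappa_i^{-1}$ for $i \in I_\ell$ and $0$ otherwise. The coloring condition is exactly what is needed to verify $\Sigma\bfb^\ell = Y^t A_{\boldsymbol\kappa}^t \bfb^\ell = 0$ by the boundary-telescoping computation above, so the $\bfb^\ell$ lie in $\ker\Sigma$; they have disjoint supports and are therefore linearly independent, and there are $d$ of them, matching $\dim\ker\Sigma = d$, so they form a basis. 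Hence $G$ is PDSC. The main obstacle — and the step deserving the most care — is the precise bookkeeping of the index/vertex correspondence in the cyclic setting (which complex is a "source" versus "sink" of a monochromatic path, and how that matches the $\pm 1$ pattern of $A_{\boldsymbol\kappa}^t\bfb^\ell$), together with confirming that surjectivity of $\lambda$ plus disjoint supports genuinely yields a basis rather than merely an independent set; the linear algebra reducing $\ker\Sigma$ to the deficiency space is standard but should be stated cleanly since it underlies why exactly $d$ colors appear.
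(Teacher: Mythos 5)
Your proposal follows essentially the same route as the paper's proof: the same correspondence between color classes and the supports $I_\ell$, the same candidate kernel vectors with $b^\ell_i = \kappa_i^{-1}$ on $I_\ell$ and $0$ elsewhere, the same telescoping computation showing $A_{\boldsymbol\kappa}^t\bfb^\ell = \1_{T(\ell)} - \1_{H(\ell)}$, and the same counting argument (disjoint supports, hence linear independence, plus there being exactly $d$ of them) to conclude that these vectors form a basis of $\ker{\Sigma}$. The direction ``coloring $\Rightarrow$ PDSC'' is complete as you describe it.

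There is, however, a genuine gap in the direction ``PDSC $\Rightarrow$ coloring''. There you are \emph{given} a basis $\bfb^1,\dots,\bfb^d$ with $\supp(\bfb^\ell) = I_\ell$, yet you say ``the right choice of $\bfb^\ell$ is essentially dictated'' and then \emph{choose} $b^\ell_i = \kappa_i^{-1}$ on each interval. In this direction you do not get to choose: for an arbitrary vector supported on $I_\ell$, the image $A_{\boldsymbol\kappa}^t\bfb^\ell$ is \emph{not} supported only on the boundary indices (the interior entries $\kappa_{i-1}b^\ell_{i-1}-\kappa_i b^\ell_i$ need not vanish), so $Y^tA_{\boldsymbol\kappa}^t\bfb^\ell = 0$ does not unpack as $\sum_{\bfy \in T(\ell)}\bfy = \sum_{\bfy \in H(\ell)}\bfy$. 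The missing step --- exactly the one nontrivial observation in this direction, and the one the paper supplies --- is that the given basis is \emph{forced} to have this form: since $\bfx_{\boldsymbol\kappa} = (\kappa_1^{-1},\dots,\kappa_m^{-1})^t$ lies in $\ker{A_{\boldsymbol\kappa}^t} \subseteq \ker{\Sigma} = \spn{\{\bfb^1,\dots,\bfb^d\}}$, and the supports $I_\ell$ are disjoint and cover $[m]$, writing $\bfx_{\boldsymbol\kappa} = \sum_\ell c_\ell\bfb^\ell$ and restricting to $I_\ell$ gives $c_\ell \neq 0$ and $b^\ell_j = c_\ell^{-1}\kappa_j^{-1}$ for all $j \in I_\ell$; after rescaling each $\bfb^\ell$ by $c_\ell$, your telescoping computation applies and the argument closes. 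You do note that $\bfx_{\boldsymbol\kappa}$ lies in the kernel, but you only use it to observe $d \geq 1$ and to handle the case $d=1$; it is also the key to pinning down the basis in the converse direction.
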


\begin{proof}
Let $\lambda: E(G) \rightarrow [d]$ be a surjective coloring of the edges of $G$ such that for all $\ell \in [d]$, $\sum_{\bfy \in H(\ell)} \bfy = \sum_{\bfy \in T(\ell)} \bfy$.  Then the difference of indicator vectors $\1_{H(\ell)} - \1_{T(\ell)}$ belongs to $\ker Y^t$ for all $\ell \in [d]$. This is equal to the image of the vector $\bfb^{\ell}$ under $A_{\boldsymbol\kappa}^t$ where $\bfb^{\ell}$ is defined by
\[
b^{\ell}_i = \begin{cases}
\kappa_i^{-1} & \text{ if } \lambda(\bfy_i \rightarrow \bfy_{i+1}) = \ell \\
0 & \text{ otherwise.}
\end{cases}
\]

Indeed, if $\bfy_i$ is the source of a path in $G[\ell]$, then $b^{\ell}_i = \kappa_i^{-1}$ and $b^{\ell}_{i-1} = 0$. So the $i$th entry of $A_{\boldsymbol\kappa}^t \bfb^{\ell}$ is $-1$. Similarly, if $\bfy_i$ is the sink of a path in $G[\ell]$, then $b_i^{\ell} = 0$ and $b_{i-1}^\ell = \kappa_{i-1}^{-1}$. So the $i$th entry of $A_{\boldsymbol\kappa}^t \bfb^{\ell}$ is $1$. If $\bfy_i$ is an interior node on a path in $G[\ell]$, then $b_i^{\ell} = \kappa_i^{-1}$ and $b_{i-1}^\ell = \kappa_{i-1}^{-1}$, so that the $i$th entry of $A_{\boldsymbol\kappa}^t \bfb^{\ell}$ is $0$. Finally if $\bfy_i$ does not belong to $G[\ell]$, then $\bfy_{i-1}$ either is also not in $G[\ell]$ or is a sink of a path in $G[\ell]$. Hence we have $b^\ell_i = b^\ell_{i-1} = 0$, and the $i$th entry of $A_{\boldsymbol\kappa}^t \bfb^\ell$ is 0.

The vectors $\bfb^1,\dots,\bfb^d$ have disjoint support since each complex has exactly one outgoing end. Thus they are linearly independent. Moreover, they form a basis for $\ker \Sigma$ as they comprise $d$ distinct vectors. Thus $G$ satisfies Condition \ref{cond:toricsteadystates}.

Now suppose that $G$ satisfies Condition \ref{cond:toricsteadystates} and let $\bfb^1,\dots,\bfb^d$ be a basis for $\ker \Sigma$ with disjoint support. In particular, we know that $\bfx_{\boldsymbol\kappa} = (\kappa_1^{-1},\dots,\kappa_m^{-1})^t$ is in $\ker \Sigma$ as it belongs to $\ker A_{\boldsymbol\kappa}^t$. So it is in the span of $\bfb^1,\dots, \bfb^d$. Thus, after rescaling each $\bfb^\ell$, we have that if $j \in \textsf{supp}(\bfb^\ell)$, then $b_j^{\ell} = \kappa_j^{-1}$.

Color the edges of $G$ by letting the edge $\bfy_i \rightarrow \bfy_{i+1}$ have color $\ell$ if and only if $i \in \textsf{supp}(\bfb^\ell)$. Then $A_{\boldsymbol\kappa}^t \bfb^\ell = \1_{H(\ell)} - \1_{T(\ell)}$. Since $\bfb^\ell \in \ker\Sigma$, we must have that $\1_{H(\ell)} - \1_{T(\ell)} \in \ker Y^t$. Hence we have $\sum_{\bfy \in H(\ell)} \bfy = \sum_{\bfy \in T(\ell)} \bfy$, as needed.
\end{proof}

The above proof uncovers another key fact about PDSC cycle networks. In particular, when the reaction rates $\kappa_{i}$ are positive, these networks trivially satisfy another condition from \cite{PDSC2012}, which we restate below.

\begin{condition}[\cite{PDSC2012},Condition 3.4]\label{cond:positiveSS}
    Consider a chemical reaction system given by the PDSC network $G$ with $m$ complexes and reaction rate constants $\kappa_{ij}$. There is a partition $I_1, \ldots, I_d$ of $[m]$ and a basis $\bfb^1, \ldots, \bfb^d$ of $\ker \Sigma$ with $\supp \bfb^i = I_i$. We say that the chemical reaction system additionally satisfies Condition \ref{cond:positiveSS} if for all $j \in [m]$, the nonzero entries of $\bfb^j$ have the same sign, that is, if 
    \[
    \text{sign}(b^j_{j_1}) = \text{sign}(b^j_{j_2}) \quad \text{ for all } j_1,j_2 \in I_j, \text{ for all } 1 \leq j \leq d.
    \]
\end{condition}

Theorem 3.8 of \cite{PDSC2012} shows that this condition is necessary for a PDSC network to have a positive steady-state.
The basis vectors $\bfb^1$, \ldots, $\bfb^d$ from the proof of Theorem \ref{thrm:toriccycles} are of a special form. In particular, when the reaction rates $\kappa_i$ are positive, their nonzero entries are all positive. This shows that PDSC cycle networks automatically satisfy Condition \ref{cond:positiveSS}.

\begin{corollary} Let $G$ be a directed cycle. If $G$ is a PDSC network, then the chemical reaction system given by $G$ satifies Condition \ref{cond:positiveSS}.
\end{corollary}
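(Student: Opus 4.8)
The plan is to reuse verbatim the explicit basis of $\ker \Sigma$ that was constructed in the forward direction of the proof of Theorem \ref{thrm:toriccycles}. First I would recall that, since $G$ is a PDSC directed cycle, Theorem \ref{thrm:toriccycles} supplies a surjective edge coloring $\lambda : E(G) \to [d]$ with $\sum_{\bfy \in H(\ell)} \bfy = \sum_{\bfy \in T(\ell)} \bfy$ for all $\ell \in [d]$, and the proof of that theorem exhibits the corresponding basis $\bfb^1,\dots,\bfb^d$ of $\ker \Sigma$ given by
$b^\ell_i = \kappa_i^{-1}$ when $\lambda(\bfy_i \to \bfy_{i+1}) = \ell$ and $b^\ell_i = 0$ otherwise. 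These vectors have pairwise disjoint supports $I_\ell = \{\, i : \lambda(\bfy_i \to \bfy_{i+1}) = \ell \,\}$, and the $I_\ell$ partition $[m]$ because each complex of a cycle has exactly one outgoing edge; thus this data is exactly of the shape required in the hypothesis of Condition \ref{cond:positiveSS}.

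Next I would observe that every nonzero entry of each $\bfb^\ell$ has the form $\kappa_i^{-1}$ for some reaction rate constant $\kappa_i$. Since a chemical reaction system fixes $\kappa_i \in \R_{>0}$ for every reaction, each such entry is strictly positive, so for every $\ell \in [d]$ and all $j_1, j_2 \in I_\ell$ we have $\text{sign}(b^\ell_{j_1}) = \text{sign}(b^\ell_{j_2}) = +1$, which is precisely the sign condition in Condition \ref{cond:positiveSS}. The degenerate case $d = 1$ is handled the same way: the single basis vector is (a positive rescaling of) $\bfx_{\boldsymbol\kappa} = (\kappa_1^{-1},\dots,\kappa_m^{-1})^t$, all of whose entries are positive.

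I do not expect a genuine obstacle here, as the content is entirely contained in the construction already used to prove Theorem \ref{thrm:toriccycles}; the only computation is the immediate positivity of reciprocals of positive reaction rates. The one point I would state carefully is well-definedness: as recorded in the second half of the proof of Theorem \ref{thrm:toriccycles}, for a cycle the basis guaranteed by Condition \ref{cond:toricsteadystates} is, after rescaling within each support block, forced to be this $\kappa$-reciprocal basis, so the sign condition does not depend on which admissible basis of $\ker \Sigma$ one selects.
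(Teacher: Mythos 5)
Your proposal is correct and matches the paper's argument: the paper likewise deduces the corollary from the explicit basis $\bfb^1,\dots,\bfb^d$ with entries $\kappa_i^{-1}$ constructed in the proof of Theorem \ref{thrm:toriccycles}, whose nonzero entries are all positive when the $\kappa_i$ are positive. Your added remark on well-definedness (that any admissible disjoint-support basis is, up to rescaling within each block, this $\kappa$-reciprocal basis) is a nice touch already implicit in the second half of that proof.
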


\comment{
\begin{rmk}
    We note that the basis vectors $\bfb^1$, \ldots, $\bfb^d$ are of a special form. In particular, their nonzero entries are all positive. While it is not the focus of this paper, the authors of \cite{PDSC2012} present another condition (Condition 3.4,\cite{PDSC2012}) that guarantees the steady-states of the system (\ref{eq:massactionsystem}) are positive. The condition states that for each $i$, the nonzero entries of $\bfb^i$ have the same sign. The above proof shows that cycles satisfying Condition \ref{cond:toricsteadystates} automatically satisfy this other condition.
\end{rmk}
}

\begin{example}[Species-overlappling cycles] An instance of PDSC cycle networks are what we call \textit{species-overlapping cycles}, denoted $SOC_m$ where $m \geq 3$. This one-parameter family of cycle networks are defined by reactions of the form $$\begin{tikzcd}[column sep = small] X_{i} + X_{i+1} \ar[r,"\kappa_i"] & X_{i+1} + X_{i+2} \end{tikzcd}$$ for $i = 1, \ldots, m$ where the indices are taken modulo the set $[m] = \{1, \ldots, m\}$. For example, when $m=4$ we get the network seen in Figure $\ref{fig:toric 4-cycle}$. 

\begin{figure}[h]
    \centering
    \begin{tikzcd}
    A+B \arrow[r,"\kappa_1"] & B+C \arrow[d,"\kappa_2"]\\
    A+D \ar[u,"\kappa_4"] & C+D \ar[l,"\kappa_3"]
    \end{tikzcd}
    \caption{A four-cycle with positive binomial steady-states.}
    \label{fig:toric 4-cycle}
\end{figure}
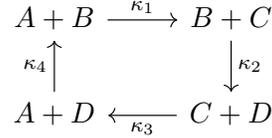

Note that the system of ordinary differential equations arising from these cycles is binomial. Our claim is that for $m\geq 3$ these networks are also indeed PDSC networks. When $m$ is odd, the matrix

\[
Y^t = 
\begin{pmatrix}
1 & & & & 1\\
1 & 1 & & & \\
 & 1 & 1 & & \\
& & & \ddots & \\
& & & & 1
\end{pmatrix} = \begin{pmatrix}
\bfe_1+\bfe_2 & \bfe_2+\bfe_3 & \cdots & \bfe_{m-1}+\bfe_m & \bfe_1 + \bfe_m
\end{pmatrix}
\]
has full rank and hence $x_{\boldsymbol\kappa} = ({\kappa}^{-1}_1, {\kappa}^{-1}_2, \ldots, {\kappa}^{-1}_m)^t$ generates the kernel of $\Sigma$. Thus, $d = 1$ and by Theorem \ref{thrm:toriccycles}, $SOC_m$ for odd $m$ is a PDSC network since $H(\ell) = T(\ell) = \emptyset$. Else if $m$ is even, then $d=2$ and a surjective coloring of the edges of the network is given as follows: $$\lambda(\bfy_i \to \bfy_{i+1}) = 
\begin{cases}
    1, &\text{ if $i$ is odd}\\
    2, &\text{ if $i$ is even}.
\end{cases}$$
With this coloring, we have $H(1) = T(2)$, $T(1) = H(2)$, and satisfy the following condition:
\begin{align*}
    \sum_{\bfy \in H(1)} \bfy
    &= \bfy_2 + \bfy_4 + \cdots + \bfy_m\\
    &= (\bfe_2+\bfe_3) + (\bfe_4+\bfe_5) + \cdots + (\bfe_m+\bfe_1)\\
    &= (\bfe_1 + \bfe_2) + (\bfe_3+\bfe_4) + \cdots + (\bfe_{m-1} + \bfe_m)\\
    &= \bfy_1 + \bfy_3 + \cdots + \bfy_{m-1}\\
    &= \sum_{\bfy \in T(1)} \bfy.
\end{align*}
Thus, by Theorem \ref{thrm:toriccycles} $SOC_m$ satisfies Condition \ref{cond:toricsteadystates} for even $m$. 
\end{example}

These networks are also partitionable and we compute the mixed volume of their natural system of equations as in Theorem \ref{thm:main}.

\begin{theorem}\label{thm:overlappingspecies}
Let $m \geq 3$. The cycle networks $SOC_m$ are partitionable. The mixed volumes of the associated systems
\[ 
\begin{cases}
    f_i &= \kappa_{i-2}x_{i-2}x_{i-1} - \kappa_ix_ix_{i+1} \quad \text{, for } i = 1, \ldots, m-1\\
    0 &= x_1 + x_2 + \cdots x_m + c
\end{cases}
\]
for odd $m$ and
\[
\begin{cases}
    f_i &= \kappa_{i-2}x_{i-2}x_{i-1} - \kappa_ix_ix_{i+1} \quad \text{, for } i = 1, \ldots, m-2\\
    0 &= x_1 + x_3 + \cdots x_{m-1} + c_1\\
    0 &= x_2 + x_4 + \cdots x_{m} + c_2
\end{cases}
\]
for even $m$ are $1$ and $\frac{m}{2}$, respectively.
\end{theorem}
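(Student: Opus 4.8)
The plan is to verify that $SOC_m$ meets the hypotheses of Theorem~\ref{thm:main} and then to evaluate the determinant it produces, handling the two parities of $m$ separately; throughout, $s=m$.

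\emph{Partitionability and the generating set.} The stoichiometric vectors of $SOC_m$ are $\bfe_{i+2}-\bfe_i$, so the linear space of conservation laws is the left kernel of the matrix with these columns; since $i\mapsto i+2$ is a single $m$-cycle on $[m]$ when $m$ is odd and splits $[m]$ into its odd and even residue classes when $m$ is even, this space is spanned by $\mathbbm{1}$ for odd $m$, and by $\mathbbm{1}_{\mathrm{odd}}:=\sum_{i\text{ odd}}\bfe_i$ and $\mathbbm{1}_{\mathrm{even}}:=\sum_{i\text{ even}}\bfe_i$ for even $m$. These are $0/1$ vectors with disjoint support, and $SOC_m$ is weakly connected, so by the proposition on weakly connected networks the steady-state ideal is multihomogeneous for the induced multigrading; hence $SOC_m$ is partitionable. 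The listed binomials $f_i=\kappa_{i-2}x_{i-2}x_{i-1}-\kappa_ix_ix_{i+1}$ are exactly the steady-state equations, and they satisfy $\sum_i f_i=0$ (and, for even $m$, the relations $\sum_{i\text{ odd}}f_i=0$ and $\sum_{i\text{ even}}f_i=0$, by re-indexing the first sum via $i\mapsto i-2$). Since the monomials $x_ix_{i+1}$ are distinct, the space of linear relations among $f_1,\dots,f_m$ has dimension $m-\rank\Sigma=\dim\ker\Sigma=d$, which is $1$ for odd $m$ and $2$ for even $m$, so these are all the relations. Therefore $f_1,\dots,f_{m-1}$ (odd $m$), respectively $f_1,\dots,f_{m-2}$ (even $m$), are linearly independent and span the same space as all of $f_1,\dots,f_m$, so they generate $I_G$; the count $m-d$ matches the number of conservation laws, as required.

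\emph{The determinant.} By Theorem~\ref{thm:main}, $\mvol(SOC_m)$ is $0$ or $|\det M|$, where $M$ is the $m\times m$ matrix whose columns are $\bfv_i:=\bfe_{i-2}+\bfe_{i-1}-\bfe_i-\bfe_{i+1}$ for $i\le m-d$ together with one basis vector $\bfe_{\alpha_j}$, $\alpha_j\in\supp(\bfw_j)$, for each conservation law $\bfw_j$; computing $|\det M|$ is the heart of the argument. The key point is that every conservation-law indicator annihilates each $\bfv_i$: $\mathbbm{1}\cdot\bfv_i=1+1-1-1=0$, and $\mathbbm{1}_{\mathrm{odd}}\cdot\bfv_i=\mathbbm{1}_{\mathrm{even}}\cdot\bfv_i=0$ because the four consecutive indices $i-2,i-1,i,i+1$ split evenly between the parities. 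Thus one may replace a row of $M$ by the sum of the rows indexed by a support class of some $\bfw_j$, which zeros out that row within the $\bfv$-block and leaves a single $1$ in an $\bfe_{\alpha_j}$-column; Laplace expanding along the $\bfe_{\alpha_j}$-columns reduces $|\det M|$ to $|\det B|$, where $B$ is the restriction of $(\bfv_1\mid\cdots\mid\bfv_{m-d})$ to a complementary set of $m-d$ coordinates. For odd $m$, writing $\bfv_i=\bfu_{i-2}-\bfu_i$ with $\bfu_k:=\bfe_k+\bfe_{k+1}$, one checks that after a unimodular change of coordinates $B$ is the reduced incidence matrix of a spanning tree of the $m$-cycle, so $|\det B|=1$. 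For even $m$, I would split the coordinates into the odd block and the even block; each $\bfv_i$ is then a step-$2$ difference vector in each block, and the column operations $\bfv_{2k}\mapsto\bfv_{2k}-\bfv_{2k-1}$ put $B$ into anti-block-triangular form whose diagonal blocks are a reduced incidence matrix of a spanning tree of the ``even'' $m/2$-cycle $\Gamma_e$ (determinant $\pm1$) and a reduced Laplacian of the ``odd'' $m/2$-cycle $\Gamma_o$; by the matrix-tree theorem the absolute value of the latter determinant is the number of spanning trees of $\Gamma_o$, namely $m/2$. Hence $|\det M|=1$ for odd $m$ and $m/2$ for even $m$.

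\emph{Conclusion.} In either case $|\det M|\neq 0$, so the vectors $\bfv_1,\dots,\bfv_{m-d},\bfe_{\alpha_1},\dots,\bfe_{\alpha_k}$ are linearly independent. As $\Newt(f_i)$ is a segment in direction $\bfv_i$ and $\conv(\mathbf{0},\bfe_{\alpha_j})\subseteq\Newt(\bfw_j\bfx-c_j)$, monotonicity of the mixed volume, together with the fact that the mixed volume of segments from the origin is the absolute value of the determinant of their directions, gives $\mvol(SOC_m)\ge|\det M|>0$; combined with Theorem~\ref{thm:main}, which gives $\mvol(SOC_m)\le|\det M|$, this yields $\mvol(SOC_m)=|\det M|$, equal to $1$ for odd $m$ and $m/2$ for even $m$. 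The main obstacle is the determinant evaluation, and within it the even case: pinning down the reduced Laplacian of $\Gamma_o$ — the source of the factor $m/2$ — requires carefully tracking the few ``wrap-around'' columns $\bfv_1,\bfv_2$ through the coordinate splitting and the column operations, and verifying that the remaining top-left block does not affect the anti-triangular determinant.
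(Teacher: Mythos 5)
Your overall architecture is the same as the paper's: verify the two partitionability conditions, invoke Theorem~\ref{thm:main}, and evaluate the resulting $m\times m$ determinant. Two things you do differently are worth noting. First, your determinant evaluation is graph-theoretic (splitting into the step-two cycles on the odd and even index classes and invoking spanning trees / the matrix-tree theorem), whereas the paper writes down explicit LU-factorizations of the relevant submatrices; your route is more conceptual and explains \emph{why} the answer is $m/2$ (the number of spanning trees of an $\nicefrac{m}{2}$-cycle), while the paper's is more mechanical but fully explicit. Second, your concluding monotonicity argument --- $\conv(\mathbf{0},\bfe_{\alpha_j})\subseteq\Newt(\bfw_j\bfx-c_j)$ plus the formula for the mixed volume of $s$ segments --- genuinely resolves the ``either $0$ or $|\det M|$'' dichotomy in Theorem~\ref{thm:main}, a point the paper's own proof passes over silently. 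That addition is a real improvement in rigor.

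There is, however, a concrete problem in your odd-case determinant evaluation. You claim that, writing $\bfv_i=\bfu_{i-2}-\bfu_i$ with $\bfu_k=\bfe_k+\bfe_{k+1}$, the matrix $B$ becomes a reduced incidence matrix of a spanning tree ``after a unimodular change of coordinates.'' The natural change of coordinates here is the matrix $U=(\bfu_1\mid\cdots\mid\bfu_m)=I+P$ with $P$ the cyclic shift, and for odd $m$ one has $\det(I+P)=\prod_{j}(1+\omega^j)=2$, so this transformation is \emph{not} unimodular. The correct bookkeeping is that $\det M=\det(U)\cdot\det\bigl(U^{-1}\bfe_1\mid D\bigr)$ where $D$ is the tree incidence matrix with columns $\bfe_{i-2}-\bfe_i$; expanding along the first column and using that the rows of $D$ sum to zero (as in Lemma~\ref{lem:Det}) gives $\det(U^{-1}\bfe_1\mid D)=\pm\,\mathbbm{1}^tU^{-1}\bfe_1=\pm\nicefrac{1}{2}$, and the factors $2$ and $\nicefrac{1}{2}$ cancel to give $|\det M|=1$. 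So your answer is right, but the mechanism you cite is not: as written, the ``unimodular'' claim is either false or circular (any integer matrix of determinant $\pm1$ is unimodularly a tree matrix, but that presupposes the conclusion). The even case sketch is plausible and checks out numerically for $m=4,6$, but as you acknowledge, the anti-block-triangular reduction and the wrap-around columns $\bfv_1,\bfv_2$ would need to be carried out explicitly before the matrix-tree step can be invoked.
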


\begin{proof} For the cycle network $SOC_m$, the polynomials of the system (\ref{eq:massactionsystem}) are $f_i = \kappa_{i-2}x_{i-2}x_{i-1} - \kappa_ix_ix_{i+1}$ so $\Newt(f_i) = \conv \mathcal A_i$ where $\mathcal A_i = \{\bfe_{i-2}+\bfe_{i-1},\bfe_i+\bfe_{i+1}\}$. We organize the proof based on the parity of $m$.

First suppose $m$ is odd. Then the network $SOC_m$ has one conservation law given by the conservation law vector $\bfw = \1$. Since each $f_i$ is homogenous then $I = \langle f_1, \ldots,f_{m-1} \rangle$ is multihomogenous with respect to the multigrading given be $\bfw$, hence $SOC_m$ is partitionable. By Theorem \ref{thm:main} the mixed volume of $SOC_m$ is the absolute value of the determinant of the matrix with columns $\bfe_1$ and $\bfe_{i-2}+\bfe_{i-1}-\bfe_i-\bfe_{i+1}$ for $i=1, \ldots,m-1$. Since one of the columns of this matrix is $\bfe_1$, we focus on the determinant of the submatrix after removing this column and the first row. For $m=3$, the submatrix is $\begin{pmatrix}
    0 & -1\\
    1 & 0
\end{pmatrix}$ which has determinant 1, as desired. For $m \geq 5$, the submatrix has an LU-factorization with

$$L = \left (
\begin{array}{c|c}
    I_{(m-3) \times (m-3)} & \mathbf{0}\\ \\ \hline \\
    \begin{matrix}
        -1 & \cdots & (-1)^j \lceil \nicefrac{j}{2} \rceil & \cdots & \lceil \nicefrac{(m-3)}{2} \rceil \\
        -1 & \cdots & -(j \mod 2) & \cdots & 0
    \end{matrix} & 
    \begin{matrix}
        1 & 0 \\
        \nicefrac{1}{\lceil \nicefrac{(m-3)}{2} \rceil} & 1 
    \end{matrix}
\end{array}
\right )
$$
and 
$$U = \left (
\begin{array}{c|c}
    U_1 & U_2 \\ \hline
    \mathbf{0} & \begin{matrix}
         \lceil \nicefrac{(m-3)}{2} \rceil & -1\\
         0 & \nicefrac{1}{\lceil \nicefrac{(m-3)}{2} \rceil}
     \end{matrix}
\end{array}
\right )
$$
where $I_{(m-3)\times (m-3)}$ is the $(m-3) \times (m-3)$ identity matrix and the $i$th row of $\begin{pmatrix}
    U_1 & U_2
\end{pmatrix}$ is $\bfe_{i+2} + \bfe_{i+3} - \bfe_i - \bfe_{i+1}$ except the last row is $\bfe_{m-1} - \bfe_{m-3} - \bfe_{m-2}$. Therefore, the mixed volume is $\det(L)\det(U) = (-1)^{m-3} = 1$.

Now suppose $m$ is even. The network $SOC_m$ has two conservation laws given by the vectors $\bfw_1 = \bfe_1 + \bfe_3 + \cdots + \bfe_{m-1}$ and $\bfw_2 = \bfe_2 + \bfe_4 + \cdots + \bfe_m$. Then for any $i,j$ and $\bfa,\bfb \in \mathcal A_i$, $\bfa \cdot \bfw_j = \bfb \cdot \bfw_j = 1$ so $I = \langle f_1, \ldots, f_{m-2} \rangle$ is multihomogeneous with respect to the conservation law vectors $\bfw_1, \bfw_2$ and hence $SOC_m$ is partitionable. By Theorem \ref{thm:main} the mixed volume of $SOC_m$ is the absolute value of the determinant of the matrix with columns $\bfe_1$, $\bfe_2$, and $\bfe_{i-2}+\bfe_{i-1}-\bfe_i-\bfe_{i+1}$ for $i=1, \ldots,m-2$. Since the first two columns are $\bfe_1, \bfe_2$, we focus on the determinant of the submatrix after removing the first two rows and columns. For $m=4$, the submatrix is $\begin{pmatrix}
    1 & -1\\
    1 & 1\\
\end{pmatrix}$ which has determinant $\frac{m}{2} = 2$, as desired. For $m \geq 6$, up to a permutation matrix, we have the following LU-factorization with
$$L = \left (
\begin{array}{c|c|c}
    1 & \mathbf{0} & 0 \\ \hline
    \mathbf{0} & I_{(m-4)\times (m-4)} & \mathbf{0}\\ \hline
    1 & \begin{matrix} -1 & \cdots & (-1)^j \lceil \nicefrac{j}{2} \rceil & \cdots & \lceil \nicefrac{(m-4)}{2} \rceil \end{matrix} & 1
\end{array} \right )$$
and 
$$U = \left(
\begin{array}{ccc}
     1 & \mathbf{0} & 1 \\
     \mathbf{0} & U_1 & U_2 \\
     0 & \mathbf{0} & \nicefrac{m}{2}
\end{array}
\right)
$$
where the $i$th row of $\begin{pmatrix}
    U_1 & U_2
\end{pmatrix}$ is $\bfe_{i+2} + \bfe_{i+3} - \bfe_i - \bfe_{i+1}$ except the last two rows are $\bfe_{m-2} - \bfe_{m-4} - \bfe_{m-3}$ and $- \bfe_{m-3}  - \bfe_{m-2}$. Thus, the mixed volume of $SOC_m$ for $m$ even is $(-1)^{(m-4)}\frac{m}{2} = \frac{m}{2}$.
\end{proof}

\section{Discussion}

In the present work, we gave a formula for the mixed volume of a binomial steady-state system for any chemical reaction network with partitionable conservation laws. This result was obtained by analyzing the possible structure of a fine mixed subdivision of the Minkowski sum of Newton polyopes from this system. An advantage of this approach is that it allows us to avoid computing a fine mixed subdivision, which is quite computationally expensive. We also characterized the directed cycles which are PDSC networks using edge colorings. We used these result to calculate the mixed volumes of all species-overlapping cycles. 

There are still many directions for further exploration. It would be interesting to consider the ways in which one can relax the disjoint support assumption for partitionable conservation laws. If one removes this assumption, there are many examples of fine mixed subdivisions with more than one cell of type $(1,\dots,1)$. Is it possible to characterize the number and volume of these in a fine mixed subdivision of such a network? Alternatively, one may search for a geometric algorithm for changing a non-partitionable network into a partitionable one and tracking the solutions. For instance, it can be shown that a slight modification of the Edelstein network is partitionable. Of particular interest, this modification can be realized by geometric means. To explain further, recall that a requirement for a partitionable network is that the affine hull of Newton polytopes $\Newt(f_i)$ are parallel to the stoichiometric subspace. In the case of the Edelstein network, it can be shown that there is an oblique projection of the affine hull of $\Newt(f_i)$ onto an affine space parallel to the stoichiometric subspace and this projection corresponds to a modification of the Edelstein network into a partitionable network while preserving the stoichiometric matrix. Thus, we are curious if this type of geometric argument can be made more general and how both the mixed volume and the steady-state degree compare to the respective quantities of the original system.

Theorem \ref{thrm:toriccycles} only applies to directed cycles and does not allow for bidirected edges. In the future, it would be interesting to generalize this result to networks whose underlying undirected graph is a cycle, and to determine to what extent we can use these results to ``glue" cycles together to create more complex networks.



\section{Acknowledgements}
Elizabeth Gross was supported by the National Science Foundation (NSF), DMS-1945584.

\bibliographystyle{plain}
\bibliography{references}

\begin{thebibliography}{10}

\bibitem{bernstein1975}
D.~N. Bernstein.
\newblock The number of roots of a system of equations.
\newblock {\em Funkcional. Anal. i Prilo\v{z}en.}, 9(3):1--4, 1975.

\bibitem{betke1992}
U.~Betke.
\newblock Mixed volumes of polytopes.
\newblock {\em Arch. Math. (Basel)}, 58(4):388--391, 1992.

\bibitem{CLS2014}
Tianran Chen, Tien-Yien Li, and Xiaoshen Wang.
\newblock Theoretical aspects of mixed volume computation via mixed
  subdivision.
\newblock {\em Commun. Inf. Syst.}, 14(4):213--242, 2014.

\bibitem{feinberg1977chemical}
Martin Feinberg and Friedrich~JM Horn.
\newblock Chemical mechanism structure and the coincidence of the
  stoichiometric and kinetic subspaces.
\newblock {\em Archive for Rational Mechanics and Analysis}, 66:83--97, 1977.

\bibitem{M2}
Daniel~R. Grayson and Michael~E. Stillman.
\newblock Macaulay2, a software system for research in algebraic geometry.
\newblock Available at \url{https://math.uiuc.edu/Macaulay2/}.

\bibitem{GHRS16}
Elizabeth Gross, Heather~A Harrington, Zvi Rosen, and Bernd Sturmfels.
\newblock Algebraic systems biology: a case study for the wnt pathway.
\newblock {\em Bulletin of mathematical biology}, 78(1):21--51, 2016.

\bibitem{grosshill2021steady}
Elizabeth Gross and Cvetelina Hill.
\newblock The steady-state degree and mixed volume of a chemical reaction
  network.
\newblock {\em Advances in Applied Mathematics}, 131:102254, 2021.

\bibitem{gross2013interfacing}
Elizabeth Gross, Sonja Petrovi{\'c}, and Jan Verschelde.
\newblock Interfacing with phcpack.
\newblock {\em Journal of Software for Algebra and Geometry}, 5(1):20--25,
  2013.

\bibitem{gunawardena2003}
Jeremy Gunawardena.
\newblock Chemical reaction network theory for in-silico biologists.
\newblock {\em Notes available for download at http://vcp. med. harvard.
  edu/papers/crnt. pdf}, 5, 2003.

\bibitem{huber1995}
Birkett Huber and Bernd Sturmfels.
\newblock A polyhedral method for solving sparse polynomial systems.
\newblock {\em Math. Comp.}, 64(212):1541--1555, 1995.

\bibitem{JS2015}
Badal Joshi and Anne Shiu.
\newblock A survey of methods for deciding whether a reaction network is
  multistationary.
\newblock {\em Mathematical Modelling of Natural Phenomena}, 10(5):47--67,
  2015.

\bibitem{kouch1976}
A.~G. Kouchnirenko.
\newblock Poly\`edres de {N}ewton et nombres de {M}ilnor.
\newblock {\em Invent. Math.}, 32(1):1--31, 1976.

\bibitem{OSTT2019}
Nida Obatake, Anne Shiu, Xiaoxian Tang, and Ang{\'e}lica Torres.
\newblock Oscillations and bistability in a model of erk regulation.
\newblock {\em Journal of Mathematical Biology}, 79:1515--1549, 2019.

\bibitem{PDSC2012}
Mercedes P\'{e}rez~Mill\'{a}n, Alicia Dickenstein, Anne Shiu, and Carsten
  Conradi.
\newblock Chemical reaction systems with toric steady states.
\newblock {\em Bull. Math. Biol.}, 74(5):1027--1065, 2012.

\bibitem{schneider1993}
Rolf Schneider.
\newblock {\em Convex bodies: the {B}runn-{M}inkowski theory}, volume~44 of
  {\em Encyclopedia of Mathematics and its Applications}.
\newblock Cambridge University Press, Cambridge, 1993.

\bibitem{verschelde1999algorithm}
Jan Verschelde.
\newblock Algorithm 795: Phcpack: A general-purpose solver for polynomial
  systems by homotopy continuation.
\newblock {\em ACM Transactions on Mathematical Software (TOMS)},
  25(2):251--276, 1999.

\end{thebibliography}
\end{document}